\newcommand{\sysn}{\left\{\begin{array}{rcl}}
\newcommand{\sysk}{\end{array}\right.}
\newtheorem{theorem}{Theorem}[section]
\newtheorem{lemma}[theorem]{Lemma}
\theoremstyle{definition}
\newtheorem{example}[theorem]{Example}
\theoremstyle{plain}
\newtheorem{proposition}[theorem]{Proposition}
\theoremstyle{definition}
\newtheorem{definition}[theorem]{Definition}
\newtheorem{corollary}[theorem]{Corollary}
\journal{...}
\begin{document}

\begin{frontmatter}


\title{Joint continuity in semitopological monoids and semilattices}
\tnotetext[t1]{The research was supported by the Russian Science Foundation (RSF Grant No. 23-21-00195).}
\author{Alexander V. Osipov}

\ead[label2]{OAB@list.ru}

\author{Konstantin Kazachenko}


\ead[label1]{voice1081@gmail.com}

\address{Krasovskii Institute of Mathematics and Mechanics, Yekaterinburg, Russia}


\begin{abstract} In this paper we study the separately continuous actions of semitopological monoids
on pseudocompact spaces. The main aim of this paper is to generalize Lawson's results to some class of pseudocompact spaces.
Also, we introduce a concept of a weak $q_D$-space and
prove that a pseudocompact space and a weak $q_D$-space form a Grothendieck
pair.

As an application of the main result, we investigate the continuity of multiplication and taking inverses in subgroups of semitopological
semigroups. In particular, we get that if $(S, \bullet)$ is  a Tychonoff pseudocompact semitopological monoid with a quasicontinuous multiplication $\bullet$  and $G$ is a subgroup of $S$, then $G$ is a topological group.

Also, we study the continuity of operations in semitopological semilattices.
\end{abstract}

\begin{keyword} semitopological monoid \sep pseudocompact space \sep Grothendieck
pair \sep topological group \sep quasicontinuous function \sep separately continuous function \sep semitopological semilattice


\MSC[2010]  54H15 \sep  54H11 \sep 06B30 \sep 06B35  \sep 20M30

\end{keyword}

\end{frontmatter}



\section{Introduction}

Deriving points of continuity of a separately continuous function from topological properties of its domain and range is a classic topic in general topology. One of the first significant results in this direction belongs to H. Hahn \cite{Hahn}.

\begin{theorem}(Hahn). Let $X$ be a complete metric space, $Y$ a compact metric space and $f:X\times Y\rightarrow \mathbb R$ a separately continuous map. Then there exists a dense $G_{\delta}$-set $A$ in $X$, such that $f$ is
jointly continuous at each point of $A\times Y$.
\end{theorem}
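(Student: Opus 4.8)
The plan is to recast the statement in terms of the map $\Phi\colon X\to C(Y)$ given by $\Phi(x)=f(x,\cdot)$, where $C(Y)$ carries the supremum norm $\|\cdot\|_\infty$; this is well defined since $f(x,\cdot)$ is continuous on the compact space $Y$ for each fixed $x$. Because every evaluation $x\mapsto f(x,y)$ is continuous, $\Phi$ is continuous from $X$ into $C(Y)$ with the topology of pointwise convergence. The first observation I would record is that norm-continuity of $\Phi$ at a point $x_0$ forces joint continuity of $f$ at $(x_0,y)$ for \emph{every} $y\in Y$: if $(x_i,y_i)\to(x_0,y)$, then $|f(x_i,y_i)-f(x_0,y_i)|\le\|\Phi(x_i)-\Phi(x_0)\|_\infty\to 0$, while $f(x_0,y_i)\to f(x_0,y)$ by continuity of $\Phi(x_0)\in C(Y)$. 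Hence it suffices to find a dense $G_\delta$ set $A\subseteq X$ on which $\Phi$ is norm-continuous; since the set of continuity points of any map into a metric space is a $G_\delta$, only density has to be established.

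The key point is that $(C(Y),\|\cdot\|_\infty)$ is separable, because $Y$ is compact metric. Fixing a countable norm-dense set $\{h_j:j\in\mathbb N\}\subseteq C(Y)$, I would set, for $j,k\in\mathbb N$,
\[ A_{j,k}=\{x\in X : |f(x,y)-h_j(y)|\le 1/k\ \text{ for all }y\in Y\}=\bigcap_{y\in Y}\{x\in X:|f(x,y)-h_j(y)|\le 1/k\}. \]
Each member of the last intersection is closed since $f(\cdot,y)$ is continuous, so $A_{j,k}$ is closed; and for fixed $k$ the family $\{A_{j,k}\}_j$ covers $X$, because $\Phi(x)$ lies within $1/k$ of some $h_j$. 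Since $X$ is completely metrizable it is a Baire space, and so is every nonempty open $W\subseteq X$; therefore $W$ cannot be the union of the countably many relatively closed sets $A_{j,k}\cap W$ unless some $A_{j,k}$ has nonempty interior meeting $W$. Consequently $O_k:=\bigcup_j\operatorname{int}(A_{j,k})$ is dense and open, and $A:=\bigcap_k O_k$ is a dense $G_\delta$ set.

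To finish, I would verify that $\Phi$ is norm-continuous at each $x\in A$: given $\varepsilon>0$, pick $k$ with $2/k<\varepsilon$; since $x\in O_k$ there is $j$ with $x\in\operatorname{int}(A_{j,k})$, and on this open neighbourhood of $x$ one has $\|\Phi(x')-h_j\|_\infty\le 1/k$ as well as $\|\Phi(x)-h_j\|_\infty\le 1/k$, so $\|\Phi(x')-\Phi(x)\|_\infty\le 2/k<\varepsilon$. Combined with the first paragraph, this shows $f$ is jointly continuous at every point of $A\times Y$.

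The step I expect to be the real obstacle is not the Baire-category bookkeeping but the choice of the right discretization. The naive route --- cover the compact $Y$ by finitely many small open sets, use a quasi-continuity lemma to make the oscillation of $f$ small over $W'\times V_i$ for each piece, and intersect --- breaks down because shrinking the $X$-neighbourhood so as to control one piece of $Y$ need not preserve the control already obtained over another, and the neighbourhoods in $Y$ produced by such a lemma typically fail to cover $Y$. Replacing the slices $f(x,\cdot)$ by the \emph{fixed} functions $h_j$ and working in the separable space $C(Y)$ circumvents this; note that compactness of $Y$ is used only via separability of $C(Y)$, and completeness of $X$ only to ensure that $X$ (and its open subspaces) are Baire --- so the argument in fact proves the theorem for any Baire metric, indeed any Baire, space $X$.
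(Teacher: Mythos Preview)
The paper does not give its own proof of this theorem: Hahn's result is quoted in the introduction purely as historical background (with a citation to \cite{Hahn}) and is never revisited, so there is nothing to compare your argument against on that front.

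That said, your proof is correct and is one of the standard modern routes to the result. The reduction to norm-continuity of $\Phi\colon X\to (C(Y),\|\cdot\|_\infty)$ is justified exactly as you wrote; the sets $A_{j,k}$ are closed by separate continuity in the $X$-variable; the covering $X=\bigcup_j A_{j,k}$ for each $k$ follows from density of $\{h_j\}$; and the Baire argument producing the dense open sets $O_k$ is sound (you correctly use that every nonempty open subspace of a complete metric space is itself Baire). The verification that $\Phi$ is $\|\cdot\|_\infty$-continuous on $A=\bigcap_k O_k$ is clean. Your closing remarks are also accurate: compactness of $Y$ enters only through separability of $C(Y)$, and completeness of $X$ only through the Baire property, so the same proof gives the conclusion for any Baire space $X$ --- which is precisely the strengthening that later led to the Namioka-type results the paper goes on to discuss.
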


In 1974, I. Namioka obtained a generalization of Hahn's theorem \cite{Namioka}.

\begin{theorem}(Namioka). Let $X$ be a strongly countably complete regular
space, $Y$ a locally compact and $\sigma$-compact space, $Z$ a pseudo-metrizable space and $f:X\times Y\rightarrow Z$ a separately continuous map. Then there exists a dense $G_{\delta}$-set $A$ in $X$, such that $f$ is
jointly continuous at each point of $A\times Y$.
\end{theorem}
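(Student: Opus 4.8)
The plan is to follow the classical route in three stages: (i) reduce to the case when $Y$ is compact; (ii) realise the desired set as a $G_\delta$; (iii) prove that $G_\delta$ is dense. For (i): since $Y$ is locally compact and $\sigma$-compact, write $Y=\bigcup_n K_n$ with each $K_n$ compact and $K_n\subseteq\operatorname{int}K_{n+1}$; a strongly countably complete regular space is Baire (given countably many dense open sets, use the defining sequence of open covers together with regularity to build a decreasing sequence of nonempty open sets whose closures lie in successive cover members and shrink into the given sets, then take the nonempty intersection of those closures). Assuming the theorem for each $(X,K_n,Z)$, one gets dense $G_\delta$ sets $A_n\subseteq X$ with $f$ jointly continuous on $A_n\times K_n$; then $A=\bigcap_n A_n$ is a dense $G_\delta$, and $f$ is jointly continuous on all of $A\times Y$ because every $y\in Y$ lies in some set $\operatorname{int}K_{n+1}$, open in $Y$, with $A\times\operatorname{int}K_{n+1}\subseteq A_{n+1}\times K_{n+1}$.

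Now let $Y$ be compact and fix a compatible pseudometric $d$ on $Z$. With the oscillation $\omega(x,y)=\inf\{\operatorname{diam}_d f(U\times V):U\ni x,\ V\ni y\ \text{open}\}$, which is upper semicontinuous and vanishes exactly at the joint‑continuity points, compactness of $Y$ makes the projection $\pi_X\colon X\times Y\to X$ closed; hence for each $\varepsilon>0$ the set
$$O_\varepsilon=\{x\in X:\omega(x,y)<\varepsilon\ \text{for all}\ y\in Y\}=X\setminus\pi_X(\{\omega\ge\varepsilon\})$$
is open, $A=\bigcap_n O_{1/n}$ is a $G_\delta$, and $A$ is precisely the set of $x$ at which $f$ is jointly continuous on the entire slice $\{x\}\times Y$. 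It therefore remains to show that each $O_\varepsilon$ is dense, i.e.\ that the closed set $X\setminus O_\varepsilon$ has empty interior.

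The idea for density: for fixed $x$ the section $f(x,\cdot)$ is continuous on the compact space $Y$, hence uniformly continuous for the unique compatible uniformity of $Y$; so the functions $g_V(x)=\sup\{d(f(x,y),f(x,y')):(y,y')\in V\}$, over symmetric entourages $V$, are lower semicontinuous in $x$, decrease as $V$ shrinks, and satisfy $\inf_V g_V(x)=0$ for all $x$. An Osgood-type argument should then produce, for each $\varepsilon$, a dense set of $x$ with a neighbourhood $U$ and an entourage $V$ such that $g_V\le\varepsilon$ on $U$; together with continuity of $x\mapsto f(x,y)$ this places such $x$ in $O_\varepsilon$ (given $y$, pick a neighbourhood $V[y_0]$ of $y$ inside $V$ and shrink $U$ using continuity of $f(\cdot,y)$ to get $\omega(x,y)<\varepsilon$). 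When $Y$ is metrizable its uniformity has a countable base $\{V_n\}$, $X=\bigcup_n\{g_{V_n}\le\varepsilon\}$ exhibits $X$ as a countable union of closed sets, and the Baire property finishes the proof — this is essentially Hahn's argument.

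The main obstacle is the non-metrizable case: the uniformity of $Y$ may have no countable base, and (as is well known) bare Baireness of $X$ does not suffice for Namioka's conclusion, so the naive Osgood argument fails. This is exactly where strong countable completeness is needed. Assuming $W_0=\operatorname{int}(X\setminus O_\varepsilon)\neq\emptyset$, one builds a decreasing sequence of nonempty open sets $W_0\supseteq W_1\supseteq\cdots$ with $\overline{W_{n+1}}\subseteq W_n$ and $\overline{W_n}$ inside a member of the $n$-th cover from the definition of strong countable completeness, interleaved with a choice of ``bad'' points $y_n\in Y$: no point of $W_{n-1}$ being in $O_\varepsilon$, compactness of $Y$ yields $y_n$ such that $\operatorname{diam}_d f(W_{n-1}\times V)\ge\varepsilon$ for every neighbourhood $V$ of $y_n$, while separate continuity in the first variable lets one shrink $W_{n-1}$ to $W_n$ so that $f(\cdot,y_j)$ is nearly constant on $\overline{W_n}$ for all $j\le n$. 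Strong countable completeness then furnishes $x^{*}\in\bigcap_n\overline{W_n}$, compactness of $Y$ a cluster point $y^{*}$ of $(y_n)$, and separate continuity of $f(x^{*},\cdot)$ at $y^{*}$ the last estimate; the contradiction must come from reconciling the largeness of the oscillation forced near the $y_n$ (it involves whole neighbourhoods in $Y$) with the smallness provided by separate continuity (it controls only the discrete sections $f(\cdot,y_j)$). I expect this interlocking of the two separate-continuity hypotheses, and the bookkeeping of precisely what to arrange at each step of the recursion, to be where essentially all the difficulty lies.
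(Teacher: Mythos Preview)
The paper does not prove this theorem at all: Namioka's theorem is stated in the introduction purely as historical background and is attributed to \cite{Namioka} without any argument. There is therefore nothing in the paper to compare your proposal against.

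As for the proposal itself, your outline follows the standard route of Namioka's original paper fairly closely (reduce to compact $Y$, set up the oscillation $G_\delta$, prove density via a recursive construction that exploits strong countable completeness). You are honest that the final recursion is where the real work lies, and indeed your sketch of that step is incomplete: you correctly identify that one must interleave shrinking the $W_n$'s with choosing bad points $y_n$, but the version you describe does not yet close. Knowing that $\operatorname{diam}_d f(W_{n-1}\times V)\ge\varepsilon$ for every neighbourhood $V$ of $y_n$, while $f(\cdot,y_j)$ is nearly constant on $\overline{W_n}$ for $j\le n$, is not by itself a contradiction at the cluster point $(x^{*},y^{*})$: the large oscillation near $y_n$ could come from points of $W_{n-1}$ far from $x^{*}$ and from $y$'s far from $y^{*}$. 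Namioka's actual argument is more delicate --- he works in $C(Y)$ (or equivalently uses a minimal usco selection), chooses the $y_n$ so that values $f(x_n,y_n)$ and $f(x_n',y_n)$ differ by at least $\varepsilon/2$ for suitable $x_n,x_n'\in W_{n-1}$, and then uses continuity of $f(x^{*},\cdot)$ at $y^{*}$ together with the control on the sections $f(\cdot,y_n)$ to derive the contradiction. Your plan is the right shape, but the endgame still needs the precise bookkeeping you anticipate; as written it is a sketch, not a proof.
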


A decent amount of studies of the separate continuity property of functions were motivated by Namioka’s result. Moreover, a new class of topological spaces was defined – \textit{Namioka's spaces}.

\medskip

An important result in this direction is the following theorem by N. Bourbaki~\cite{Bourbaki}.

\begin{theorem}(Bourbaki). Let $X$ be a Baire space, $y_0 \in Y$ has a countable base, $Z$ a metric space and $\pi : X \times Y \to Z$ a separately continuous function. Then the set of points $x \in X$ for which $\pi$ is discontinous in $(x, y_0)$ is of first Baire category.

\end{theorem}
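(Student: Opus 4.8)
The plan is to express the set of discontinuity points of $\pi$ at the level $y_0$ as a countable union of closed sets, and to show each of them is nowhere dense by a standard oscillation argument in which the Baire property of $X$ is used exactly once.

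First I would fix a metric $d$ on $Z$ and, replacing a given countable base at $y_0$ by the family of its finite intersections, a decreasing countable base $V_1\supseteq V_2\supseteq\cdots$ of open neighbourhoods of $y_0$. Separate continuity then has the convenient consequence that $\operatorname{diam}\pi(\{x\}\times V_n)\to 0$ for every $x$, and that $\pi$ is continuous at $(x,y_0)$ if and only if for every $k\in\mathbb N$ there are an open $U\ni x$ and an $n$ with $\operatorname{diam}\pi(U\times V_n)<1/k$. Hence, putting
\[
D_k=\{x\in X:\operatorname{diam}\pi(U\times V_n)\ge 1/k\ \text{for every open }U\ni x\text{ and every }n\in\mathbb N\},
\]
the discontinuity set is $\bigcup_{k} D_k$, and it suffices to prove each $D_k$ is nowhere dense. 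That $D_k$ is closed is immediate, since a neighbourhood $U$ witnessing $x\notin D_k$ witnesses it for all of its points.

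The substance is that $D_k$ has empty interior. Assuming a nonempty open $O\subseteq D_k$, I would introduce the sets $A_n=\{x\in O:\operatorname{diam}\pi(\{x\}\times V_n)\le 1/(4k)\}$. Each $A_n$ is closed, being an intersection over $y,y'\in V_n$ of the closed sets $\{x:d(\pi(x,y),\pi(x,y'))\le 1/(4k)\}$ (here continuity of $\pi$ in the first variable enters), and the $A_n$ cover $O$ by the remark above. Since $O$ is an open subspace of a Baire space, it is itself a Baire space, so some $A_{n_0}$ is not nowhere dense, hence contains a nonempty open set $U$. Now choose any $y^\ast\in V_{n_0}$; using continuity of $\pi(\cdot,y^\ast)$ I would shrink $U$ to a nonempty open $U_1\subseteq U$ with $\operatorname{diam}\pi(U_1\times\{y^\ast\})\le 1/(4k)$. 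For $x,x'\in U_1$ and $y,y'\in V_{n_0}$ the triangle inequality through $\pi(x,y^\ast)$ and $\pi(x',y^\ast)$ then gives $d(\pi(x,y),\pi(x',y'))\le 3/(4k)<1/k$, i.e. $\operatorname{diam}\pi(U_1\times V_{n_0})<1/k$; but $U_1\subseteq O\subseteq D_k$, a contradiction.

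Therefore every $D_k$ is closed with empty interior, and the discontinuity set $\bigcup_k D_k$ is of the first Baire category. The only points that need care are purely organisational: arranging the base at $y_0$ to be decreasing (so the ``$\operatorname{diam}\to 0$'' statement is literally true), and invoking the Baire hypothesis in precisely the form ``a countable family of closed sets covering a nonempty open subset of a Baire space cannot all be nowhere dense.'' I do not anticipate a genuine obstacle; the three-term oscillation estimate is the classical one.
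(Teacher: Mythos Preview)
Your proof is correct and is, in fact, the standard oscillation argument for this classical result. Note, however, that the paper does not supply its own proof of this theorem: it is quoted in the introduction as a known result with a citation to Bourbaki~\cite{Bourbaki}, so there is no proof in the paper against which to compare your approach. The only cosmetic point worth tightening is to be explicit that each $A_n$ is closed \emph{in $O$} (as a relatively closed set), since that is the form in which the Baire property of the open subspace $O$ is applied; otherwise the write-up is clean.
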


Among later studies, the works of E.A.~Reznichenko stand out, where he explores the interplay between Namioka’s theorem and generalizations of the classical Grothendieck theorem (see more about Grothendieck’s theorem in \cite{Arh1,Os}). In particular, the following results were proven in \cite{Reznichenko}.

\begin{theorem}(Reznichenko)\label{contCriterion}
    Let $X$ and $Y$ be Tychonoff pseudocompact spaces and $\Phi : X \times Y \to \mathbb{R}$ be a separately continuous function. Then $\Phi$ extends to a separately continuous function on $\beta X \times \beta Y$ if and only if there is a dense $G_\delta$-set $E \subset X$ such that $\Phi$ is jointly continuous at each point of $E \times Y$.
\end{theorem}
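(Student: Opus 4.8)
I would prove the two implications separately; the substantial one is sufficiency, where an extension has to be built.

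\textbf{Necessity.} Suppose $\Phi$ has a separately continuous extension $\widehat\Phi\colon\beta X\times\beta Y\to\mathbb R$. Since $\beta X$ and $\beta Y$ are compact Hausdorff, $\beta X$ is a Namioka space, so Namioka's theorem applied to $\widehat\Phi$ yields a dense $G_\delta$-set $\widetilde E\subset\beta X$ at each point of whose product with $\beta Y$ the map $\widehat\Phi$ is jointly continuous. I would then take $E:=\widetilde E\cap X$. It is a $G_\delta$-set in $X$, and it is dense in $X$: a Tychonoff pseudocompact space is $G_\delta$-dense in its Stone--\v{C}ech compactification (every nonempty $G_\delta$-subset of $\beta X$ meets $X$), so, given a nonempty open $V\subseteq\beta X$, a routine nested-open-sets argument using the Baire property of $\beta X$ places a nonempty $G_\delta$-subset of $\beta X$ inside $V\cap\widetilde E$, and such a set meets $X$; hence $\widetilde E$ meets every nonempty relatively open subset of $X$ in a point of $X$. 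Finally $\Phi=\widehat\Phi|_{X\times Y}$ is jointly continuous at each point of $E\times Y\subseteq\widetilde E\times\beta Y$, as required.

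\textbf{Sufficiency.} Assume the dense $G_\delta$-set $E$ is given. The plan is to build the extension by enlarging one coordinate at a time, starting with $X$. For each fixed $y\in Y$ the section $\Phi^y:=\Phi(\cdot,y)$ is continuous on the pseudocompact space $X$, hence bounded, so it extends to $\widehat{\Phi^y}\in C(\beta X)$; set $\Psi(p,y):=\widehat{\Phi^y}(p)$ on $\beta X\times Y$. By construction $\Psi(\cdot,y)$ is continuous for every $y$, and $\Psi(x,\cdot)=\Phi(x,\cdot)$ is continuous for every $x\in X$, so the only thing left to check is that $\Psi(p,\cdot)$ is continuous for every $p\in\beta X$; granting this, $\Psi$ is separately continuous on $\beta X\times Y$. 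I would then repeat the construction in the second coordinate — extending each bounded section $\Psi(p,\cdot)\in C(Y)$ to $\widehat{\Psi(p,\cdot)}\in C(\beta Y)$ and setting $\widehat\Psi(p,q):=\widehat{\Psi(p,\cdot)}(q)$ — and, provided the analogous slice-continuity statement holds, obtain a separately continuous $\widehat\Psi\colon\beta X\times\beta Y\to\mathbb R$. It extends $\Phi$, since $\widehat\Psi(x,y)=\widehat{\Psi(x,\cdot)}(y)=\Psi(x,y)=\widehat{\Phi^y}(x)=\Phi(x,y)$ for $(x,y)\in X\times Y$.

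\textbf{The main obstacle} is the slice-continuity statement: that fixing a point of the enlarged coordinate still produces a continuous slice. Consider the first step, where one must show $y\mapsto\widehat{\Phi^y}(p)$ is continuous on $Y$ for each $p\in\beta X$. Given a net $y_\alpha\to y$ in $Y$, separate continuity of $\Phi$ gives $\Phi^{y_\alpha}\to\Phi^y$ pointwise on $X$, and the family $\{\Phi^y:y\in Y\}$ has compact closure in $\mathbb R^X$ with the topology of pointwise convergence, because for each $x\in X$ the section $\{\Phi^y(x):y\in Y\}$ is the image of the pseudocompact space $Y$ under the continuous function $\Phi(x,\cdot)$, hence bounded. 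So it is enough to show that on $\{\Phi^y:y\in Y\}$ the topology of pointwise convergence on $X$ coincides with that of pointwise convergence on $\beta X$, i.e.\ that pointwise convergence on $X$ of the sections propagates to their continuous extensions at every point of $\beta X$. If $X$ were compact this would be exactly a consequence of Grothendieck's double-limit theorem applied to a suitable relatively countably compact subfamily of $C_p(X)$. For our merely pseudocompact $X$ this is precisely where the hypothesis on $E$ is consumed: the dense $G_\delta$-set of points of joint continuity of $\Phi$ supplies — in the spirit of the Namioka-type results quoted above — the double-limit / near-equicontinuity control on $\{\Phi^y\}$ that compactness of $X$ would otherwise provide, so that a Grothendieck-type theorem for function spaces over a pseudocompact space applies. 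The second step is of the same nature: although the first factor $\beta X$ is now compact, the function-domain in play is $Y$, still only pseudocompact, so I would first manufacture, by a Namioka-type argument for the separately continuous map $\Psi$ on $\beta X\times Y$, a dense $G_\delta$-set $F\subseteq Y$ of joint-continuity points of $\Psi$, and then rerun the first-step argument with $F$ in place of $E$. Getting these two Grothendieck/Namioka-type inputs right — converting ``joint continuity on a dense $G_\delta$ slice'' into continuity of the extended sections — is the technical core, and is where I expect essentially all of the difficulty to lie.
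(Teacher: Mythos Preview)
The paper does not contain a proof of this theorem: it is stated as a result of Reznichenko, attributed to him in the heading, and cited from \cite{Reznichenko}. There is therefore no ``paper's own proof'' against which to compare your proposal.

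Assessing your plan on its own merits: the necessity direction is essentially complete. Namioka's theorem applies to the compact pair $(\beta X,\beta Y)$, and your argument that $\widetilde E\cap X$ is dense in $X$ is correct once streamlined: for nonempty open $U\subset X$ write $U=V\cap X$ with $V$ open in $\beta X$; then $\widetilde E\cap V$ is a nonempty $G_\delta$ subset of $\beta X$ (nonempty by density of $\widetilde E$), and it meets $X$ because a Tychonoff pseudocompact space is $G_\delta$-dense in its Stone--\v{C}ech compactification.

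For sufficiency you have correctly located the obstacle but not removed it. The claim that the dense $G_\delta$ set $E$ of joint-continuity points ``supplies the double-limit / near-equicontinuity control'' is the whole content of the theorem in this direction, and your text does not indicate how you would actually carry it out. Concretely, you need to prove that if $y_\alpha\to y$ in $Y$ then $\widehat{\Phi^{y_\alpha}}(p)\to\widehat{\Phi^{y}}(p)$ for every $p\in\beta X$, knowing only that $\Phi$ is jointly continuous on $E\times Y$; invoking ``a Grothendieck-type theorem for function spaces over a pseudocompact space'' is circular unless you name one whose hypotheses are met here, and the standard Grothendieck theorem requires countable compactness of the function family in $C_p(X)$, which is not immediate from joint continuity on $E\times Y$. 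The second step has the same gap. So as written this is a plausible outline, with the necessity half done and the sufficiency half still at the level of a wish; to complete it you would have to reproduce a substantial part of Reznichenko's argument from \cite{Reznichenko}.
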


\begin{theorem}(Reznichenko)\label{contCriterion1}
    Let $X$ and $Y$ be Tychonoff pseudocompact spaces, $Z$ a space and $\Phi : X \times Y \to Z$ a separately continuous function. Then $\Phi$ extends to a separately continuous function  $\Tilde{\Phi} : \beta X \times \beta Y \to \mu Z$ if and only if $\Phi$ is quasicontinuous.
\end{theorem}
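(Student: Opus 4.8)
The plan is to reduce the $\mu Z$-valued statement to a few standard facts about the Dieudonn\'e completion and then handle the two implications by very different methods: necessity is a short deduction from Namioka's theorem together with a density argument, whereas sufficiency requires constructing the extension and is where the substance lies. \textbf{Reductions.} We may assume $Z$ is Tychonoff, since passing to the Tychonoff reflection affects neither $\mu Z$ nor the separate continuity or quasicontinuity of the maps involved nor the truth of the extension statement; thus $Z$ is dense in $\mu Z$ and $\mu Z$ is a Dieudonn\'e complete subspace of $\beta Z$. The single fact that forces $\mu Z$, rather than $\beta Z$, to be the codomain is that \emph{the closure in $\mu Z$ of the image of a continuous map from a pseudocompact space into $Z$ is compact} (such a closure is a closed, hence Dieudonn\'e complete, pseudocompact subspace of $\mu Z$, and a pseudocompact Dieudonn\'e complete space is compact, being closed in a product of pseudocompact metric, hence compact, factors); consequently every continuous map from a pseudocompact Tychonoff space $T$ into $Z$ extends uniquely to a continuous map $\beta T\to\mu Z$ with compact image. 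This is used at every stage.

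\textbf{Necessity.} Suppose $\Phi$ extends to a separately continuous $\tilde\Phi\colon\beta X\times\beta Y\to\mu Z$. I first claim that any separately continuous map $f\colon K\times L\to T$ with $K,L$ compact and $T$ Tychonoff is quasicontinuous. Indeed, fix $(k_0,l_0)\in K\times L$ and an open set $W\ni f(k_0,l_0)$, and choose $h_1,\dots,h_n\in C(T)$ and $\varepsilon>0$ with $\{t:\ |h_i(t)-h_i(f(k_0,l_0))|<\varepsilon$ for all $i\}\subseteq W$. Applying Namioka's theorem to $h=(h_1,\dots,h_n)\colon K\times L\to\mathbb{R}^{n}$ (legitimate since $K$ is strongly countably complete and regular and $L$ is locally compact and $\sigma$-compact, both being compact) yields a dense $G_\delta$ set $E\subseteq K$ such that $h\circ f$ is jointly continuous at each point of $E\times L$. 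The set $H$ of those $k$ in a prescribed neighbourhood of $k_0$ for which $\|h(f(k,l_0))-h(f(k_0,l_0))\|_\infty<\varepsilon/2$ is open and nonempty, so it contains a point $k_1$ of $E$; by joint continuity of $h\circ f$ at $(k_1,l_0)$ one gets a nonempty open box $A\times B$ inside any prescribed neighbourhood of $(k_0,l_0)$ on which $\|h(f(k,l))-h(f(k_1,l_0))\|_\infty<\varepsilon/2$, hence $f(A\times B)\subseteq W$. Thus $\tilde\Phi$ is quasicontinuous on $\beta X\times\beta Y$; since quasicontinuity is inherited by the restriction to any dense subspace and $X\times Y$ is dense in $\beta X\times\beta Y$, $\Phi$ is quasicontinuous.

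\textbf{Sufficiency.} Assume $\Phi$ is separately continuous and quasicontinuous. For each $x\in X$ the continuous section $\Phi^{x}\colon Y\to Z$ extends, by the Reductions, to a continuous $\overline{\Phi^{x}}\colon\beta Y\to\mu Z$; these assemble into $\Phi_{1}\colon X\times\beta Y\to\mu Z$, continuous in the second variable. One then argues that $\Phi_{1}$ is continuous in the first variable, extends each section $\Phi_{1}(\cdot,q)$ over $\beta X$ (possible since its image is a continuous image of the pseudocompact space $X$) to obtain $\tilde\Phi\colon\beta X\times\beta Y\to\mu Z$, continuous in the first variable, and finally checks that $\tilde\Phi$ is continuous in the second variable and that it restricts to $\Phi$ (the latter being immediate from continuity of $\Phi$ in each variable at genuine points of $X\times Y$). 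The two ``continuity in the remaining variable'' steps are the core of the argument, and this is exactly where quasicontinuity is indispensable: a Grothendieck-type argument, using the pseudocompactness of $Y$ (respectively of $X$), shows that the pointwise limits of the continuous sections taken along nets in $X$ (respectively in $\beta X$) are again continuous --- equivalently that the pertinent subset of $C_p(\beta Y)$ is compact with pointwise closure contained in the continuous functions --- so that forming the extension over $\beta X$ in one variable does not destroy continuity in the other. Quasicontinuity is precisely the hypothesis that makes this ``the closure stays inside the continuous functions'' conclusion available.

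\textbf{Main obstacle.} The decisive difficulty is therefore in the sufficiency direction: proving that the constructed $\tilde\Phi$ is separately continuous on $\beta X\times\beta Y$. Because pointwise limits of continuous functions need not be continuous, no crude limit argument suffices, and quasicontinuity must be converted --- via a Grothendieck/Namioka-style compactness argument exploiting the pseudocompactness of $X$ and $Y$ --- into genuine control (of an equicontinuity flavour, or via countable compactness of the relevant pointwise closure) over how the sections $\overline{\Phi^{x}}$ vary. A secondary but ever-present point is to keep all images inside $\mu Z$ rather than $\beta Z$, which is why the fact that closures of pseudocompact sets in $\mu Z$ are compact is invoked throughout.
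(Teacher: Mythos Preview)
The paper does not give its own proof of this statement: Theorem~\ref{contCriterion1} is attributed to Reznichenko and cited from~\cite{Reznichenko}, so there is nothing in the paper to compare your proposal against.

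On the merits of the proposal itself: your necessity direction is essentially complete and correct---the Namioka argument yields quasicontinuity of $\tilde\Phi$ on $\beta X\times\beta Y$, and quasicontinuity does restrict to dense subspaces, so $\Phi$ is quasicontinuous. Your sufficiency direction, however, is only an outline. You correctly identify the architecture (extend sections in one variable, then the other) and you correctly name the main obstacle (showing that the intermediate map $\Phi_1$ is continuous in the \emph{first} variable, and then that $\tilde\Phi$ is continuous in the \emph{second}), but you do not actually carry out the ``Grothendieck-type argument'' that you invoke. Saying that ``quasicontinuity is precisely the hypothesis that makes this conclusion available'' is a statement of faith, not a proof: you must exhibit the mechanism by which quasicontinuity of $\Phi$ on $X\times Y$ forces the map $x\mapsto\overline{\Phi^{x}}(q)$ to be continuous for every $q\in\beta Y$ (not merely for $q\in Y$), and likewise at the second stage. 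This is the genuine content of Reznichenko's theorem, and it is missing from your write-up. The remark about keeping images inside $\mu Z$ via compact closures of pseudocompact sets is correct and is indeed used throughout, but it is routine compared to the extension step you have left open.
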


Here we denote the Stone–Čech compactification of $X$ by $\beta X$ and the Hewitt realcompactification of $Z$ by $\mu Z$.

The above theorems have found applications in topological algebra. One of the first to use them in his research was J.D.~Lawson  \cite{LawsonSep}, who obtained very interesting results on the joint continuity of the multiplication in topologized semigroups. In particular, Lawson attained the following results.

\begin{theorem}(Lawson) {\it Let $S$ be a compact Hausdorff right semitopological semigroup with an identity ${\bf e}$, $X$ a compact Hausdorff space, and  $\pi: S\times X\rightarrow X$ a separately continuous action. If $g$ is a unit in $S$, then $\pi$ is continuous at $(g,x)$ for all $x\in X$.}
\end{theorem}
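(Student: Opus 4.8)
The plan is to use the hypothesis that $g$ is a unit to translate the problem to the identity $\mathbf e$, and then to establish joint continuity on $\{\mathbf e\}\times X$ directly, combining Namioka's theorem with the associativity law $\pi(s,\pi(t,x))=\pi(st,x)$. First I would set up the reduction. Since $g$ is a unit, $\lambda_g:=\pi(g,\cdot)$ is a homeomorphism of $X$ with inverse $\pi(g^{-1},\cdot)$ (both continuous by separate continuity of $\pi$), and $\rho_{g^{-1}}:s\mapsto sg^{-1}$ is a homeomorphism of $S$ with inverse $s\mapsto sg$ (right translations in $S$ being continuous). Then $\Theta:=\rho_{g^{-1}}\times\lambda_g$ is a homeomorphism of $S\times X$ sending $(g,x)$ to $(\mathbf e,gx)$, and $\pi\circ\Theta=\pi$ because $\pi(sg^{-1},\pi(g,x))=\pi(sg^{-1}g,x)=\pi(s,x)$. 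Hence $\pi$ is continuous at $(g,x)$ if and only if it is continuous at $(\mathbf e,gx)$; as $\lambda_g(X)=X$, it is enough to prove that $\pi$ is continuous at every point of $\{\mathbf e\}\times X$.

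Next I would fix $z\in X$ and show that $\pi$ is continuous at $(\mathbf e,z)$. Because $X$ is compact Hausdorff, hence completely regular, it suffices to show that $\phi\circ\pi$ is continuous at $(\mathbf e,z)$ for every $\phi\in C(X,[0,1])$ (given a neighbourhood of $z$, take an Urysohn function $\phi$ vanishing at $z$ and equal to $1$ off it). Fixing such a $\phi$, put $f=\phi\circ\pi:S\times X\to[0,1]$, which is separately continuous. Since $S$ is compact Hausdorff (so strongly countably complete and regular), $X$ is compact (so locally compact and $\sigma$-compact), and $[0,1]$ is metrizable, Namioka's theorem yields a dense $G_\delta$ set $D\subseteq S$ such that $f$ is jointly continuous at every point of $D\times X$. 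Now suppose $f$ is not continuous at $(\mathbf e,z)$: then for some $\varepsilon>0$ there is a net $(s_\nu,y_\nu)\to(\mathbf e,z)$ with $|\phi(\pi(s_\nu,y_\nu))-\phi(z)|\ge\varepsilon$, and, passing to a subnet, $\pi(s_\nu,y_\nu)\to w$ for some $w\in X$ with $|\phi(w)-\phi(z)|\ge\varepsilon$. For each $t\in D$, associativity gives $\pi(t,\pi(s_\nu,y_\nu))=\pi(ts_\nu,y_\nu)$; the left side tends to $\pi(t,w)$ by continuity of $\pi(t,\cdot)$, while $ts_\nu\to t$ (multiplication in $S$ is continuous in the left variable and $s_\nu\to\mathbf e$) and $y_\nu\to z$, so the right side has $\phi$-image tending to $\phi(\pi(t,z))$ by joint continuity of $f$ at $(t,z)$. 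Thus $\phi(\pi(t,w))=\phi(\pi(t,z))$ for all $t\in D$; as both sides are continuous in $t$ (separate continuity of $\pi$) and $D$ is dense, the equality holds for every $t\in S$, and $t=\mathbf e$ gives $\phi(w)=\phi(z)$ — a contradiction. Hence $\phi\circ\pi$, and therefore $\pi$, is continuous at $(\mathbf e,z)$, and the reduction step completes the argument. (This second step uses nothing about units; the unit hypothesis is needed only for the translation $\Theta$, which is exactly what moves continuity from $\{\mathbf e\}\times X$ to $\{g\}\times X$.)

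The hard part will be this transfer in the second step. Namioka's theorem delivers joint continuity only on a ``generic'' dense $G_\delta$ of points of $S$, and since $S$ is a semigroup rather than a group there is no reason for $\mathbf e$ to lie in it and no way to divide it there; the device that makes it work is to use the action identity to compare the cluster value $w$ with $z$ through all $t$ in that dense set simultaneously, and only afterwards to pass to the limit by continuity and the fact that $C(X,[0,1])$ separates the points of the compact Hausdorff space $X$. A lesser obstacle is that $X$ need not be metrizable, which is why Namioka's theorem is applied to the scalar reductions $\phi\circ\pi$ rather than to $\pi$ itself.
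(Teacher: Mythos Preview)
Your strategy---prove joint continuity on $\{\mathbf e\}\times X$ by combining a Namioka-type dense $G_\delta$ with the associativity identity $\pi(t,\pi(s_\nu,y_\nu))=\pi(ts_\nu,y_\nu)$, then translate to $\{g\}\times X$ using that $g$ is a unit---is precisely the route the paper takes (Theorem~\ref{1contTheorem} for the identity, Corollary~\ref{invCont} for the translation), which in the compact case reduces to Lawson's argument. The core device, comparing the cluster value $w$ with $z$ through every $t$ in the dense set and then letting $t\to\mathbf e$, is identical.

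Two points, however, need repair. First, you silently assume $\pi(\mathbf e,x)=x$: you use it when centring the Urysohn function at $z$ (the value of $\pi$ at $(\mathbf e,z)$ is $\pi(\mathbf e,z)$, not $z$) and again when setting $t=\mathbf e$ to conclude $\phi(w)=\phi(z)$. The paper's definition of \emph{action} requires only associativity, and the statement as recorded does not add the identity law. Corollary~\ref{invCont} handles this by first passing to $Y=\overline{\pi(1,X)}^{\beta X}$, on which $\pi'(1,\cdot)$ is the identity, and only then invoking the $\{\mathbf e\}$-case; your proof needs the same preliminary restriction.

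Second, your two halves invoke opposite one-sided hypotheses on $S$. The reduction uses continuity of the \emph{right} translation $\rho_{g^{-1}}:s\mapsto sg^{-1}$, while the convergence $ts_\nu\to t$ requires continuity of the \emph{left} translation $\lambda_t:s\mapsto ts$ (your justification ``continuous in the left variable'' is the wrong direction: here the left factor $t$ is fixed and the right factor varies). Under the paper's convention---``right topological'' means each $s_x:y\mapsto xy$ is continuous---only left translations are guaranteed, so your Namioka step is fine but your $\Theta$ need not be continuous. The fix is the paper's: translate on the left instead, writing $\pi(s,x)=\pi(g,\pi(g^{-1}s,\pi(1,x)))$, so that continuity at $(\mathbf e,\pi(1,x))$ transfers to $(g,x)$ via the continuous maps $s\mapsto g^{-1}s$ and $\pi(g,\cdot)$.
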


\begin{corollary}(Lawson) {\it Let $G$ be a subgroup of a compact Hausdorff semitopological
semigroup. Then $G$ is a topological group.}
\end{corollary}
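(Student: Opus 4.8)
The plan is to deduce the corollary from Lawson's theorem by manufacturing, out of the subgroup $G$, a compact Hausdorff semitopological monoid in which $G$ consists of units, and then taking for $\pi$ the left multiplication. Let $T$ be the ambient compact Hausdorff semitopological semigroup, let $G\subseteq T$ be a subgroup, and let ${\bf e}\in G$ be its identity (an idempotent of $T$; note that ${\bf e}$ need not be an identity of $T$, and $T$ need not have one). First I would observe that $S:=\overline{G}$ is a closed subsemigroup of $T$: continuity of left translations gives $a\overline{G}\subseteq\overline{aG}\subseteq\overline{G}$ for $a\in G$, hence $G\,\overline{G}\subseteq\overline{G}$, and then continuity of right translations gives $\overline{G}\,b\subseteq\overline{Gb}\subseteq\overline{G}$ for $b\in\overline{G}$ (using $Gb\subseteq G\,\overline{G}\subseteq\overline{G}$), so $SS\subseteq S$. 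As a closed subspace of $T$ with the inherited, still separately continuous, multiplication, $S$ is a compact Hausdorff semitopological semigroup, and ${\bf e}$ is a two-sided identity for it: for any net $g_\alpha\to x$ in $S$ with $g_\alpha\in G$ one has ${\bf e}x=\lim {\bf e}g_\alpha=\lim g_\alpha=x$, and dually $x{\bf e}=x$. Finally, every $g\in G$ is a unit of $S$, since its group inverse $g^{-1}\in G\subseteq S$ satisfies $gg^{-1}=g^{-1}g={\bf e}$.

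Next I would apply Lawson's theorem to this $S$, with $X:=S$ and with $\pi:S\times S\to S$, $\pi(s,x)=sx$: this $\pi$ is separately continuous because $S$ is semitopological, and it is an action of the monoid $S$ on $X=S$ by associativity together with the identity law $\pi({\bf e},x)=x$. The theorem then tells us that $\pi$ is continuous at $(g,x)$ for every $g\in G$ and every $x\in S$. Restricting to the subspace $G\times G\subseteq S\times S$, on which multiplication takes values in $G$, this already gives that the multiplication of $G$ is jointly continuous.

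It remains to prove that inversion is continuous on $G$. This is the step I expect to be the real obstacle, and it is also where one genuinely needs the joint continuity of $\pi$ on all of $G\times S$ rather than merely on $G\times G$. The argument I would run: fix $g_0\in G$ and a net $(g_\alpha)$ in $G$ with $g_\alpha\to g_0$; since $G$ carries the subspace topology from $S$ and $g_0^{-1}\in G$, it suffices to show $g_\alpha^{-1}\to g_0^{-1}$ in the compact space $S$. Let $h\in S$ be an arbitrary cluster point of $(g_\alpha^{-1})$ and pass to a subnet with $g_\gamma^{-1}\to h$, so that also $g_\gamma\to g_0$. Continuity of $\pi$ at $(g_0,h)\in G\times S$ yields ${\bf e}=g_\gamma g_\gamma^{-1}=\pi(g_\gamma,g_\gamma^{-1})\to\pi(g_0,h)=g_0h$, hence $g_0h={\bf e}$; multiplying on the left by $g_0^{-1}$ and using associativity, $g_0^{-1}g_0={\bf e}$, and the fact that ${\bf e}$ is the identity of $S$, one gets $h=g_0^{-1}$. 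Thus $g_0^{-1}$ is the unique cluster point of $(g_\alpha^{-1})$ in the compact space $S$, so $g_\alpha^{-1}\to g_0^{-1}$. Hence inversion is continuous at every point of $G$, and $G$ is a topological group.
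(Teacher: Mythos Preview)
Your argument is correct and follows the same route as the paper (and Lawson's original). The passage to the compact monoid $S=\overline{G}$ with identity ${\bf e}$ is the standard reduction, your application of Lawson's theorem to the left-multiplication action yields joint continuity at every $(g,x)\in G\times S$, and your cluster-point argument for the continuity of inversion is exactly the one the paper uses in its generalized Corollary in Section~5 (there the cluster point is taken in $\beta S$ because $S$ is only pseudocompact; in the compact case $\beta S=S$ and the two arguments coincide).
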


The main aim of this paper is to generalize Lawson's  results to some class of pseudocompact spaces and, as an application of the main result, we investigate the continuity of multiplication and taking inverses in subgroups of semitopological monoids and semilattices.

\section{Preliminaries}

Let us recall some definitions. A function $f : X \times Y \to Z$ is called \textit{separately continuous}, if functions $f_x : Y \to Z, f_x(y) = f(x, y)$ and $f_y : X \to Z, f_y(x) = f(x, y)$ are continuous for each $x \in X$ and $y \in Y$.

 A semigroup endowed with topology a is called \textit{topologized} semigroup. A topologized semigroup $X$ is called \textit{right topological semigroup}, if map $s_x : X \to X$, defined as $s_x(y) = xy$ is continuous for each $x \in X$.

 \textit{Semitopological} semigroup is a topologized semigroup with a separately continuous multiplication. The map $\pi : S \times X \to X$, where $S$ is a semigroup and $X$ is a set is called an \textit{action} of $S$ on $X$, if for each $s_1, s_2 \in S$ and $x \in X$ we have $\pi(s_1s_2, x) = \pi(s_1, \pi(s_2, x))$.

\textit{Semitopological monoid} is a topologized semigroup with an identity and a separately continuous multiplication.

\medskip

Let $X$ and $Y$ be topological spaces. A function $f: X \rightarrow Y$ is {\it quasicontinuous}  at $x\in X$ if for every
open set $V\subset Y$ where $f(x)\in V$ and for every open set $U\subset X$ where $x\in U$ there is a nonempty open set $W\subset U$ such that
$f(W)\subset V$ \cite{Neub}. If $f$ is quasicontinuous at every point of $X$, we say that $f$ is quasicontinuous.
We say that a subset of $X$ is {\it quasi-open (or semi-open)}  if it is contained in the closure of its interior.
Then a function $f : X \rightarrow Y$ is quasicontinuous if and only if $f^{-1}(V)$ is quasi-open for every open set $V\subset Y$ \cite{Neub}.

All spaces are assumed to be Tychonoff unless otherwise stated. For every space $X$ we denote the set of the real-valued continuous functions on $X$ and the space of continuous functions on $X$ in the topology of pointwise convergence by $C(X)$ and $C_p(X)$ respectively.

Recall that a topological space $X$ is {\it pseudocompact}, if every real-valued continuous function on $X$ is bounded.

\section{Main results}

In this section we obtain a generalisation of Lawson's central theorem in \cite{LawsonSep} and some of its corollaries.

\begin{theorem}\label{1contTheorem}
    Let $S$ be a pseudocompact right topological monoid, $X$ a pseudocompact space, $\pi : S \times X \to X$ a separately continuous quasicontinuous action such that $\pi(1, x) = x$ for each $x \in X$. Then $\pi$ is continuous at each point of $\lbrace 1 \rbrace \times X$.
\end{theorem}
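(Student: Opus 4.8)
The plan is to reduce the statement to the Grothendieck-type machinery provided by Theorems~\ref{contCriterion} and~\ref{contCriterion1}, which is available precisely because $S$ and $X$ are Tychonoff pseudocompact. Since $\pi$ is separately continuous and quasicontinuous, Theorem~\ref{contCriterion1} gives a separately continuous extension $\tilde{\pi}\colon \beta S \times \beta X \to \mu X$; because $X$ is pseudocompact we may regard $\mu X$ as sitting inside $\beta X$ (indeed $\mu X \subset \beta X$ here), so $\tilde{\pi}$ is a separately continuous map into $\beta X$. The point of passing to the compactifications is that $\beta S$ is a compact right topological monoid (right continuity of translations extends from $S$ to $\beta S$ by density and compactness of the range) acting separately continuously on the compact space $\beta X$. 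Strictly speaking one should check that $\tilde\pi$ is still an \emph{action} of $\beta S$ on $\beta X$; this follows from separate continuity, the density of $S\times X$ in $\beta S\times\beta X$, and the fact that the associativity identity $\tilde\pi(ab,x)=\tilde\pi(a,\tilde\pi(b,x))$ is built from separately continuous maps, so it propagates from the dense subset where it is already known.

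Once this is in place, I would invoke Lawson's original theorem (the compact Hausdorff version quoted in the Introduction): $\tilde\pi$ is continuous at $(g,\xi)$ for every unit $g$ of $\beta S$ and every $\xi\in\beta X$. In particular, taking $g=1$ — which is still a unit of $\beta S$, and which satisfies $\tilde\pi(1,\xi)=\xi$ for all $\xi\in\beta X$ by density and separate continuity — we get that $\tilde\pi$ is jointly continuous at each point of $\{1\}\times\beta X$. Restricting back to $S\times X$ and noting that $\pi=\tilde\pi|_{S\times X}$, joint continuity at points of $\{1\}\times X$ is immediate: a net $(s_\alpha,x_\alpha)\to(1,x)$ in $S\times X$ also converges in $\beta S\times\beta X$, so $\pi(s_\alpha,x_\alpha)=\tilde\pi(s_\alpha,x_\alpha)\to\tilde\pi(1,x)=x=\pi(1,x)$, and since $X$ carries the subspace topology from $\beta X$ this is convergence in $X$. (One small subtlety: a priori $\tilde\pi(s_\alpha,x_\alpha)$ could drift toward a point of $\beta X\setminus X$, but the computed limit $x$ lies in $X$, so this does not occur; joint continuity of $\tilde\pi$ forces the limit, and it happens to be in $X$.)

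The main obstacle I expect is the verification that $\tilde\pi$ restricted to $\beta S\times\beta X$ genuinely takes values in $\beta X$ and is a separately continuous \emph{action} by a compact right topological monoid — i.e.\ transporting the algebraic identity and the one-sided continuity from the dense core to the whole compactification, and handling the interaction with $\mu X$. The right-topological structure on $\beta S$ needs care: for fixed $a\in\beta S$ the translation $b\mapsto ab$ must be continuous, which one obtains by first extending multiplication on $S$ to $\beta S\times\beta S\to\beta S$ so that it is continuous in the second variable (approximating $a$ by a net in $S$ and using that each $s\in S$ gives a continuous translation), but one must be slightly careful because a priori this only gives \emph{separate} continuity, which is all Lawson needs. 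Everything else — that $1$ remains a unit, that $\tilde\pi(1,\cdot)=\mathrm{id}$, and the final net argument — is routine once the compact picture is set up correctly.
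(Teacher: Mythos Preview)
Your first move---using Theorem~\ref{contCriterion1} to get a separately continuous $\tilde\pi\colon\beta S\times\beta X\to\beta X$---is exactly how the paper begins. The gap is in the next step: you want to apply Lawson's compact theorem to $\tilde\pi$, and for that you need $\beta S$ to be a compact right topological monoid acting via $\tilde\pi$. But there is no mechanism here to put a multiplication on $\beta S$. The multiplication $m\colon S\times S\to S$ is only assumed continuous in the \emph{second} variable, so Reznichenko's extension theorems (which require separate continuity) do not apply to $m$. The Ellis-type route you sketch---``approximate $a\in\beta S$ by a net $s_\alpha\in S$ and set $ab=\lim s_\alpha b$''---needs $s\mapsto sb$ to be continuous on $S$, i.e.\ continuity in the \emph{first} variable, which is precisely what you do not have. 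So for $a\in\beta S\setminus S$ the product $ab$ is simply undefined, and the associativity identity $\tilde\pi(ab,x)=\tilde\pi(a,\tilde\pi(b,x))$ you want to propagate has no meaning. Separate continuity plus density is in any case not enough to transport such identities, since separately continuous maps are not determined by their restriction to a dense set.

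The paper sidesteps this entirely by \emph{not} reducing to Lawson's compact theorem but rather rerunning Lawson's argument in the pseudocompact setting. After obtaining $\tilde\pi$, it assumes discontinuity at $(1,x)$, passes to a subnet with $\pi(u_\gamma,x_\gamma)\to z\ne x$ in $\beta X$, separates $x$ from $z$ by a function $f$, and then---this is the key step---applies Theorem~\ref{contCriterion} to the \emph{real-valued} composite $f\circ\pi$ to find $r\in S$ near $1$ such that $f\circ\pi$ is jointly continuous at every point of $\{r\}\times X$. The right topological structure is used only inside $S$ (to get $ru_\gamma\to r$), and the action identity only inside $S\times X$ (to rewrite $\pi(ru_\gamma,x_\gamma)=\pi(r,\pi(u_\gamma,x_\gamma))$). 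The extension $\tilde\pi$ is invoked solely for its separate continuity in the second variable, to compute the limit $\tilde\pi(r,\pi(u_\gamma,x_\gamma))\to\tilde\pi(r,z)$. No algebraic structure on $\beta S$ is ever needed. Your plan would work if $S$ were semitopological and $(S,S)$ a Grothendieck pair (so that $m$ itself extends), but under the bare right-topological hypothesis you must argue as the paper does.
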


\begin{proof} By Theorem \ref{contCriterion1}, $\pi$ extends to a separately continuous function $\Tilde{\pi} : \beta S \times \beta X \to \mu X$. Since $X$ is pseudocompact, $\mu X = \beta X$ (see ex. 3.11.C in \cite{Engelking}).

    Suppose $\pi$ is discontinuous at some point $(1, x)$. Then there are nets $u_\alpha \to 1, x_\alpha \to x$, such that $\pi(u_\alpha, x_\alpha)$ does not converge to $\pi(1, x) = x$. It means that there is an open neighbourhood $U$ of $x$ and a subnet $\lbrace \pi(u_\beta, x_\beta) \rbrace_{\beta \in \Lambda}$ such that $\pi(u_\beta, x_\beta) \not\in U$ for each $\beta \in \Lambda$. Since $\beta X$ is compact,  $\lbrace \pi(u_\beta, x_\beta) \rbrace_{\beta \in \Lambda}$ has a converging subnet $\lbrace \pi(u_\gamma, x_\gamma) \rbrace_{\gamma \in \Gamma}$. Let $\pi(u_\gamma, x_\gamma)$ converge to $z$. Since $z \neq x$, we can, making $U$ smaller if needed, assume that $z \not\in U$. Since $\beta X$ is Tychonoff, there is a continuous $\Tilde{f} : \beta X \to I$ such that $\Tilde{f}(x) = 1, \Tilde{f}(X \setminus U) = \lbrace 0 \rbrace$. We set $W = U \cap \Tilde{f}^{-1}((\frac{1}{2}, 1])$, $V = \Tilde{f}^{-1}([0, \frac{1}{2}))$ and $\Tilde{f} \restriction_{X} = f$. Note that $z \in V$ and $\Tilde{\pi}(1, y) = y$ for each $y \in \beta X$.

    Since $\Tilde{\pi}$ is separately continuous, there is an open neighbourhood $N$ of 1, such that $\Tilde{\pi}(N, z) \subset V$ and $\Tilde{\pi}(N, x) \subset W$. Clearly, $\Tilde{f} \circ \Tilde{\pi}$ is a separately continuous extension of $f \circ \pi$, so by Theorem \ref{contCriterion}, there is a point $r \in N \cap S$ such that $f \circ \pi$ is continuous at each point $(r, y)$. Since $S$ is right topological, $r u_\gamma \to r1 = r$. Since $f \circ \pi$ is continuous at $(r, x)$, we also have that $f(\pi(r u_\gamma, x_\gamma)) \to f(\pi(r, x)) \geq \frac{1}{2}$, since $\pi(r, x) \in W$.

    On the other hand, since $r, u_\gamma \in S$ and $x_\gamma \in X$, we have:
    \begin{center}
        $\Tilde{f}(\Tilde{\pi}(r u_\gamma, x_\gamma)) = f(\pi(r u_\gamma, x_\gamma)) = f(\pi(r, \pi(u_\gamma, x_\gamma))) = \Tilde{f}((\Tilde{\pi}(r, \Tilde{\pi}(u_\gamma, x_\gamma)))$.
    \end{center}

    But $\Tilde{\pi}(u_\gamma, x_\gamma) \to z$, whence, due to separate continuity, $\Tilde{\pi}(r, \Tilde{\pi}(u_\gamma, x_\gamma))$ converges to $\Tilde{\pi}(r, z)$ and $\Tilde{f}(\Tilde{\pi}(r, \Tilde{\pi}(u_\gamma, x_\gamma)))$ converges to $\Tilde{f}(\Tilde{\pi}(r, z)) = 0$, since $\Tilde{\pi}(r, z) \in X \setminus U$. The resulting contradiction finishes the proof.
\end{proof}

\begin{corollary}\label{1contTheoremExt}
  {\it  Let $S$ be a pseudocompact right topological monoid, $X$ a pseudocompact space, $\pi : S \times X \to X$ a separately continuous action such that $\pi(1, x) = x$ for each $x \in X$ and $\pi$ extends to a separately continuous function $\Tilde{\pi} : \beta S \times \beta X \to \beta X$. Then $\pi$ is continuous at each point of $\lbrace 1 \rbrace \times X$.}
\end{corollary}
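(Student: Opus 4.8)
The plan is to re-use the argument of Theorem~\ref{1contTheorem} essentially verbatim. In that proof, quasicontinuity entered \emph{only} through Theorem~\ref{contCriterion1}, which — together with the equality $\mu X=\beta X$ valid for pseudocompact $X$ — produced a separately continuous extension $\Tilde\pi:\beta S\times\beta X\to\beta X$ of $\pi$. Here such an extension is assumed outright, so I would simply delete the first paragraph of the previous proof and keep everything after it. In other words, the corollary records that the conclusion of Theorem~\ref{1contTheorem} uses $\pi$ only through the existence of such a $\Tilde\pi$.

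Concretely, I would argue by contradiction: if $\pi$ is discontinuous at some $(1,x)$, pick nets $u_\alpha\to1$, $x_\alpha\to x$ with $\pi(u_\alpha,x_\alpha)$ staying outside a fixed open $U\ni x$ along a subnet, then use compactness of $\beta X$ to extract a further subnet $\pi(u_\gamma,x_\gamma)\to z\in\beta X$, where necessarily $z\ne x$, and after shrinking $U$ we may take $z\notin U$. Choose continuous $\Tilde f:\beta X\to I$ with $\Tilde f(x)=1$, $\Tilde f(X\setminus U)=\{0\}$, put $W=U\cap\Tilde f^{-1}((\tfrac12,1])$, $V=\Tilde f^{-1}([0,\tfrac12))$, $f=\Tilde f\restriction_X$, and note $\Tilde\pi(1,y)=y$ for all $y\in\beta X$ since this holds on the dense set $X$. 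Separate continuity of $\Tilde\pi$ gives an open $N\ni1$ with $\Tilde\pi(N,z)\subset V$ and $\Tilde\pi(N,x)\subset W$; since $\Tilde f\circ\Tilde\pi$ is a separately continuous real-valued extension of $f\circ\pi$, Theorem~\ref{contCriterion} produces $r\in N\cap S$ at which $f\circ\pi$ is jointly continuous on $\{r\}\times X$. Then $ru_\gamma\to r$ (as $S$ is right topological) forces $f(\pi(ru_\gamma,x_\gamma))\to f(\pi(r,x))\ge\tfrac12$, while the action identity on $S\times X$ and separate continuity of $\Tilde\pi$ give
\[
\Tilde f(\Tilde\pi(ru_\gamma,x_\gamma))=\Tilde f(\Tilde\pi(r,\Tilde\pi(u_\gamma,x_\gamma)))\longrightarrow\Tilde f(\Tilde\pi(r,z))=0,
\]
because $\Tilde\pi(r,z)\in X\setminus U$ — a contradiction.

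I do not expect a genuine obstacle here, only bookkeeping. The two points worth checking are that $\Tilde f\circ\Tilde\pi$ really is a separately continuous real-valued function restricting to $f\circ\pi$ on $S\times X$, so that Theorem~\ref{contCriterion} applies, and that the action identity $\pi(ru_\gamma,x_\gamma)=\pi(r,\pi(u_\gamma,x_\gamma))$ — which is only available on $S\times X$, not on $\beta S\times\beta X$ — is exactly what lets the computation be pushed into $\beta X$ and compared with the joint‑continuity estimate at $(r,x)$.
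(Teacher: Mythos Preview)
Your proposal is correct and matches the paper's own treatment: the corollary is stated without proof precisely because the argument of Theorem~\ref{1contTheorem} goes through verbatim once the extension $\Tilde\pi$ is given, quasicontinuity having been used only in the first paragraph via Theorem~\ref{contCriterion1}. Your two ``points worth checking'' are exactly the right ones and are handled just as in the theorem's proof.
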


\begin{definition}
    A pair of topological spaces $(X, Y)$ is called a \textit{Grothendieck pair}, if for each continuous map $f : X \to C_p(Y)$ space $\overline{f(X)}$ is compact.
\end{definition}

Notion of a Grothendieck pair was introduced by Reznichenko in \cite{ReznichenkoOld}. It is easy to see that if $(X, Y)$ is a Grothendieck pair then $X$ is necessarily pseudocompact.

\begin{theorem}(Reznichenko)\label{extTheorem}
    Let $X$, $Y$, $Z$ be Tychonoff topological spaces, $X$ and $Y$ pseudocompact, $(X, Y)$ a Grothendieck pair and $f : X \times Y \rightarrow Z$ is separately continuous. Then $f$ can be extended to a separately continuous function $\Tilde{f} : \beta X \times \beta Y \rightarrow \beta Z$.
\end{theorem}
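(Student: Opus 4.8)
The strategy is to reduce to interval-valued functions, recast a separately continuous scalar function as a continuous map of $X$ into $C_{p}(Y)$, use the Grothendieck-pair hypothesis together with Grothendieck's classical theorem to land inside a \emph{compact} subspace of $C_{p}(\beta Y)$, and then extend over $\beta X$ by its universal property.

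First I would reduce to the case $Z=I$. Realize $Z$ as a subspace of a Tychonoff cube $I^{A}$ so that $\beta Z$ is the closure of $Z$ in $I^{A}$, and write $f=(f_{a})_{a\in A}$ with each $f_{a}:X\times Y\to I$ separately continuous. It then suffices to extend each $f_{a}$ to a separately continuous $\widetilde f_{a}:\beta X\times\beta Y\to I$ and set $\widetilde f=(\widetilde f_{a})$; this $\widetilde f$ is separately continuous and restricts to $f$. The one point needing care is that $\widetilde f(\beta X\times\beta Y)\subseteq\beta Z$: for fixed $x\in X$ the map $y\mapsto\widetilde f(x,y)$ is \emph{continuous} on $\beta Y$ and sends $Y$ into $Z$, hence sends $\beta Y$ into $\overline{Z}=\beta Z$; then for fixed $y\in\beta Y$ the map $x\mapsto\widetilde f(x,y)$ is continuous on $\beta X$ and sends the dense set $X$ into $\beta Z$, hence sends $\beta X$ into $\beta Z$.

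Now fix one separately continuous $\varphi:X\times Y\to I$. The assignment $g(x)=\varphi(x,\cdot)$ defines a map $g:X\to C_{p}(Y)$ which is well defined because $\varphi(x,\cdot)$ is continuous, and continuous because $\varphi(\cdot,y)$ is continuous for every $y$. Since $(X,Y)$ is a Grothendieck pair, $K:=\overline{g(X)}$ is a compact subset of $C_{p}(Y)$, and, being the closure of a family of $I$-valued functions, it consists of continuous $I$-valued functions on $Y$. As $Y$ is pseudocompact, every $h\in K$ is bounded and extends uniquely to $\widehat h\in C(\beta Y)$; by Grothendieck's theorem (see \cite{Arh1,Os}) the set $\widehat K=\{\widehat h:h\in K\}$ is compact in $C_{p}(\beta Y)$ and $h\mapsto\widehat h$ is a homeomorphism of $K$ onto $\widehat K$. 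Hence $x\mapsto\widehat{\varphi(x,\cdot)}$ is a continuous map of $X$ into the compact Hausdorff space $\widehat K$, and by the universal property of $\beta X$ it extends to a continuous map $G:\beta X\to\widehat K\subseteq C_{p}(\beta Y)$. Setting $\widetilde\varphi(x,y):=G(x)(y)$ produces a function on $\beta X\times\beta Y$ that is continuous in $y$ (because $G(x)\in C(\beta Y)$), continuous in $x$ (because $x\mapsto G(x)(y)$ is the composition of $G$ with the continuous evaluation at $y$), and agrees with $\varphi$ on $X\times Y$. Taking $\widetilde f_{a}=\widetilde\varphi$ for $\varphi=f_{a}$ and returning to the reduction above finishes the proof.

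I expect the main obstacle to be the step that invokes Grothendieck's theorem: the claim that the extension operator $h\mapsto\widehat h$ turns the compact set $K\subseteq C_{p}(Y)$ into a compact subset of $C_{p}(\beta Y)$. Its continuity at the points of $\beta Y\setminus Y$ is not formal and is exactly where pseudocompactness of $Y$ is used, through Grothendieck's double-limit argument: a net in $C(\beta Y)$ converging pointwise on the dense pseudocompact subspace $Y$ to a function continuous on $Y$ cannot oscillate at the remaining points of $\beta Y$, so it already converges pointwise on all of $\beta Y$. Everything else — the reductions above and the $\beta X$-extension — is routine once this compactness is in hand.
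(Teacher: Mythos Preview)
The paper does not prove this theorem: it is quoted from Reznichenko \cite{ReznichenkoOld} and used as a black box, so there is no proof in the paper to compare your argument against.

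That said, your key step has a genuine gap. The reduction to $Z=I$ and the verification that $\widetilde f$ lands in $\beta Z$ are fine, and you have correctly located the crux at the claim that $\widehat K$ is compact in $C_p(\beta Y)$. But the justification you give --- that ``a net in $C(\beta Y)$ converging pointwise on the dense pseudocompact subspace $Y$ to a function continuous on $Y$ \ldots\ already converges pointwise on all of $\beta Y$'' --- is false as stated. On $Y=[0,\omega_1)$ take continuous $g_\alpha$ with $g_\alpha\equiv 0$ on $[0,\alpha]$ and $g_\alpha\equiv 1$ on $[\alpha+\omega,\omega_1)$; the net $(g_\alpha)_{\alpha<\omega_1}$ converges to $0$ pointwise on $Y$, yet $\widehat g_\alpha(\omega_1)=1$ for every $\alpha$. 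So pseudocompactness of $Y$ alone does not force pointwise convergence on $\beta Y$; the compactness of $K$ must do real work here, and your sketch does not use it.

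What is actually needed is the dual statement: setting $E:Y\to C_p(K)$, $E(y)(h)=h(y)$, one must show $\overline{E(Y)}$ is compact in $C_p(K)$, i.e., that $(Y,K)$ is a Grothendieck pair whenever $Y$ is pseudocompact and $K$ is compact. Equivalently, pseudocompact subspaces of $C_p(K)$ have compact closure for compact $K$. This is a genuine extension of the classical Grothendieck theorem (which concerns countably compact subsets), not a formal consequence of it, and it is essentially equivalent to the special case of the very theorem you are proving in which $X$ is already compact. Citing ``Grothendieck's theorem'' at this point without supplying that extra argument leaves the proof circular.
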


\begin{corollary}\label{invCont}
 {\it Let $S$ be a pseudocompact right topological monoid, $X$ a pseudocompact space and $\pi : S \times X \to X$ a separately continuous and quasicontinuous action. Then $\pi$ is jointly continuous at each point of $\lbrace u \rbrace \times X$, where $u$ is a unit of $S$.}
\end{corollary}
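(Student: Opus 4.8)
The plan is to reduce joint continuity of $\pi$ at a point $(u,x)$ with $u$ invertible to joint continuity at $(1,x)$, and then to invoke Theorem~\ref{1contTheorem}. Let $u^{-1}\in S$ be the inverse of $u$. Given nets $u_\alpha\to u$ in $S$ and $x_\alpha\to x$ in $X$, the map $s\mapsto u^{-1}s$ is continuous (as $S$ is a right topological monoid), so $v_\alpha:=u^{-1}u_\alpha\to u^{-1}u=1$, while $uv_\alpha=(uu^{-1})u_\alpha=u_\alpha$; hence by the action identity
\[
\pi(u_\alpha,x_\alpha)=\pi(uv_\alpha,x_\alpha)=\pi\bigl(u,\pi(v_\alpha,x_\alpha)\bigr).
\]
As $\pi(u,\cdot)\colon X\to X$ is continuous, it suffices to prove that $\pi(v_\alpha,x_\alpha)\to\pi(1,x)$: then $\pi(u_\alpha,x_\alpha)\to\pi\bigl(u,\pi(1,x)\bigr)=\pi(u\cdot 1,x)=\pi(u,x)$, which is joint continuity at $(u,x)$. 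Thus the corollary follows once we know $\pi$ is jointly continuous at every point of $\{1\}\times X$.

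If the action is unital, i.e. $\pi(1,x')=x'$ for all $x'\in X$, this is exactly Theorem~\ref{1contTheorem}. To cover the general case I would repeat the proof of Theorem~\ref{1contTheorem} at a point $(1,x)$. That proof builds a separately continuous extension $\widetilde{\pi}\colon\beta S\times\beta X\to\beta X$ (Theorem~\ref{contCriterion1} together with $\mu X=\beta X$), from an assumed discontinuity extracts nets $u_\gamma\to 1$, $x_\gamma\to x$ with $\pi(u_\gamma,x_\gamma)\to z\in\beta X$ and $z\neq\pi(1,x)$, separates $z$ from $\pi(1,x)$ by a function $\widetilde{f}\colon\beta X\to I$ with $\widetilde{f}(\pi(1,x))=1$, and then derives a contradiction; the only essential use of $\pi(1,\cdot)=\mathrm{id}$ is in asserting $\widetilde{\pi}(1,z)=z$. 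But $p:=\pi(1,\cdot)$ is a continuous idempotent, since $\pi(1,\pi(1,x'))=\pi(1\cdot 1,x')=\pi(1,x')$; hence each value $\pi(u_\gamma,x_\gamma)\in X$ is fixed by $p$, so $\widetilde{\pi}(1,\pi(u_\gamma,x_\gamma))=p(\pi(u_\gamma,x_\gamma))=\pi(u_\gamma,x_\gamma)$, and passing to the limit (continuity of $\widetilde{\pi}(1,\cdot)$) gives $\widetilde{\pi}(1,z)=z$. Making this substitution (and reading $\pi(1,x)$ for $x$ as the relevant target point throughout), the argument of Theorem~\ref{1contTheorem} applies and establishes joint continuity of $\pi$ at each $(1,x)$; together with the first paragraph this proves the corollary.

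I expect the non-unital adjustment to be the only delicate point: one has to check that the single feature of the Stone–Čech extension used in the proof of Theorem~\ref{1contTheorem} survives, and the observation that $\pi(u_\gamma,x_\gamma)$ lands in the fixed-point set of the retraction $\pi(1,\cdot)$ is exactly what makes it survive. A tempting alternative is to pass at once to the retract $Y:=\pi(1,X)$ — which is closed in $X$ (an equalizer of continuous maps into a Hausdorff space) and pseudocompact (a continuous image of $X$), carries a separately continuous action $\pi_Y\colon S\times Y\to Y$ with $\pi_Y(1,y)=y$, and satisfies $\pi(s,x)=\pi(s,\pi(1,x))$ so that conclusions on $Y$ transfer back to $X$ — but then one must separately verify that $\pi_Y$ is again quasicontinuous (to apply Theorem~\ref{1contTheorem} to it) or that $\widetilde{\pi}$ restricts to a separately continuous map $\beta S\times\beta Y\to\beta Y$ (to apply Corollary~\ref{1contTheoremExt}), neither being automatic from $Y\subseteq X$; I would therefore favour the direct re-run above.
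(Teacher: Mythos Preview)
Your reduction from $(u,x)$ to $(1,x)$ is exactly the paper's final step (there written as the factorisation $\pi=g\circ\pi'\circ f$ with $f(s,x)=(u^{-1}s,\pi(1,x))$ and $g=\pi(u,\cdot)$). Your handling of the non-unital case is also correct: the fixed-point observation $\widetilde\pi(1,\pi(u_\gamma,x_\gamma))=\pi(u_\gamma,x_\gamma)$, hence $\widetilde\pi(1,z)=z$, is precisely what the proof of Theorem~\ref{1contTheorem} needs, and replacing the target $x$ by $\pi(1,x)$ everywhere works because $\widetilde\pi(1,x)=\pi(1,x)$.

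The paper takes a different route for the non-unital step, and it is in fact a sharpened version of the ``tempting alternative'' you discarded. Instead of $Y=\pi(1,X)\subset X$, the paper sets $Y=\overline{\pi(1,X)}^{\beta X}$, which is \emph{compact}; then $\pi'=\widetilde\pi\restriction_{S\times Y}$ is a separately continuous action of $S$ on $Y$ with $\pi'(1,y)=y$, and since $S$ is pseudocompact and $Y$ is compact, $(S,Y)$ is a Grothendieck pair, so Theorems~\ref{extTheorem} and~\ref{contCriterion1} give quasicontinuity of $\pi'$ for free. Theorem~\ref{1contTheorem} then applies directly to $\pi'$. In other words, the obstruction you identified (quasicontinuity need not restrict) is dissolved by passing to the closure in $\beta X$ rather than staying inside $X$. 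Your approach avoids invoking the Grothendieck-pair machinery at the cost of reopening the proof of Theorem~\ref{1contTheorem}; the paper's approach is a clean black-box reduction but leans on Theorem~\ref{extTheorem}. Both are valid.
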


\begin{proof}
    The proof is almost the same as the proof of Corollary 5.2 in \cite{LawsonSep}. We show first that $\pi(S, \pi(1, X)) \subset \pi(1, X)$. Indeed, if $x \in \pi(S, \pi(1, X))$, then $x = \pi(s, \pi(1, y))$ for some $s \in S$ and $y \in X$. But $\pi(s, \pi(1, y)) = \pi(1s, \pi(1, y)) = \pi(1, \pi(s, \pi(1, y))) = \pi(1, \pi(s1, y)) \in \break \in \pi(1, X)$.

    Let $\Tilde{\pi} : \beta S \times \beta X \to \beta X$ be a separately continuous extension of $\pi$, $Y = \overline{\pi(1, X)}^{\beta X}$ and $\pi' = \Tilde{\pi}\restriction_{S \times Y}$. We show that $\pi'$ is a quasicontinuous action with range in $Y$ and $\pi'(1, y) = y$ for each $y \in Y$. Indeed, take any $s_1, s_2 \in S$ and $y \in Y$ and let $y_\alpha$ be a net in $\pi(1, X)$ converging to $y$. Then $\pi'(s_1s_2, y_\alpha)$ converges to $\pi'(s_1s_2, y)$ due to separate continuity of $\pi'$. But since $y_\alpha \in \pi(1, X) \subset X$, we also have that $\pi'(s_1s_2, y_\alpha) = \pi(s_1s_2, y_\alpha) = \pi(s_1, \pi(s_2, y_\alpha)) = \pi'(s_1, \pi'(s_2, y_\alpha)) \rightarrow \pi'(s_1, \pi'(s_2, y))$ (we apply separate continuity of $\pi'$ twice here), so $\pi'(s_1s_2, y) = \pi'(s_1, \pi'(s_2, y))$.

    Note that $\pi(1, \pi(1, x)) = \pi(1 \cdot 1, x) = \pi(1, x)$, so it is easy to see that $\pi'(1, y) = y$ for each $y \in Y$.

    Now we show that $\pi'(S, Y) \subset Y$. Indeed, since $\pi'$ is separately continuous, we have that $\pi'(S, Y) = \bigcup\limits_{s \in S} \pi'(s, Y) = \bigcup\limits_{s \in S} \pi'(s, \overline{\pi(1, X)}^{\beta X}) \subset \bigcup\limits_{s \in S} \overline{\pi'(s, \pi(1, X))}^{\beta X} = \bigcup\limits_{s \in S} \overline{\pi(s, \pi(1, X))}^{\beta X} \subset \bigcup\limits_{s \in S} \overline{\pi(1, X)}^{\beta X} = Y$.

    Finally, since $S$ is pseudocompact and $Y$ is compact, $(S, Y)$ is a Grothendieck pair by \cite{}, so Theorems \ref{extTheorem} and \ref{contCriterion1} show that $\pi'$ is quasicontinuous.

    Previous considerations show that $\pi'$ is jointly continuous at each point of $\lbrace 1 \rbrace \times \pi(1, X)$ by Theorem \ref{1contTheorem}. Let $f(s, x) = (u^{-1}s, \pi(1, x))$ and $g(x) = \pi(u, x)$, where $s \in S$ and $x \in X$. Then $g \circ \pi' \circ f = \pi$. Indeed, we have \begin{center}
    $g \circ \pi' \circ f(s, x) = g \circ \pi'(u^{-1}s, \pi(1, x)) = g(\pi'(u^{-1}s, \pi(1, x)) = \pi(u, \pi(u^{-1}s, \pi(1, x)) = \pi(uu^{-1}s1, x) = \pi(s, x)$.\end{center} On the other hand, $f$ is continuous as a product of continuous maps and $f(u, x) = (u^{-1}u, \pi(1, x)) = (1, \pi(1, x))$, so $\pi'$ is continuous at $f(u, x)$. Finally, $g$ is continuous due to separate continuity of $\pi$, so $\pi$ is continuous at $(u, x)$.
\end{proof}

\begin{corollary}
  {\it  Let $S$ be a right topological monoid, $X$ a pseudocompact space, $(S, X)$ a Grothendieck pair and $\pi : S \times X \to X$ a separately continuous action. Then $\pi$ is jointly continuous at each point of $\lbrace u \rbrace \times X$, where $u$ is a unit of $S$.}
\end{corollary}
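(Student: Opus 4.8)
The plan is to reduce the statement to Corollary~\ref{invCont} by verifying that all of its hypotheses hold. First, recall that a Grothendieck pair $(X,Y)$ forces $X$ to be pseudocompact (the remark right after the definition of a Grothendieck pair); hence from the assumption that $(S,X)$ is a Grothendieck pair we get that $S$ is pseudocompact. So $S$ is a pseudocompact right topological monoid and $X$ is pseudocompact, and the only missing ingredient for Corollary~\ref{invCont} is quasicontinuity of $\pi$.

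To obtain it, I would apply Theorem~\ref{extTheorem} with $Z = X$: the spaces $S$, $X$, $X$ are Tychonoff, $S$ and $X$ are pseudocompact, $(S,X)$ is a Grothendieck pair, and $\pi : S \times X \to X$ is separately continuous, so $\pi$ extends to a separately continuous map $\Tilde{\pi} : \beta S \times \beta X \to \beta X$. Since $X$ is pseudocompact, $\mu X = \beta X$ (ex.~3.11.C in \cite{Engelking}), so this is in fact a separately continuous extension $\Tilde{\pi} : \beta S \times \beta X \to \mu X$ of $\pi$. Now Theorem~\ref{contCriterion1}, applied to the separately continuous map $\pi$ between the Tychonoff pseudocompact spaces $S$ and $X$ with codomain $X$, tells us precisely that the existence of such an extension is equivalent to $\pi$ being quasicontinuous; hence $\pi$ is quasicontinuous.

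With $S$ a pseudocompact right topological monoid, $X$ pseudocompact, and $\pi$ a separately continuous quasicontinuous action, Corollary~\ref{invCont} applies verbatim and gives joint continuity of $\pi$ at every point of $\lbrace u \rbrace \times X$ for each unit $u$ of $S$. I do not expect any genuine obstacle here: the real work is already contained in Theorem~\ref{extTheorem} and Corollary~\ref{invCont}. The single point deserving a moment's attention is the identification $\mu X = \beta X$ for pseudocompact $X$, which is what allows the extension produced by Theorem~\ref{extTheorem} to be plugged into the quasicontinuity criterion of Theorem~\ref{contCriterion1}; once that observation is made, the corollary is immediate.
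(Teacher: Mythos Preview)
Your proof is correct and is exactly the intended argument: the paper states this corollary without proof, but the same chain of implications (Grothendieck pair $\Rightarrow$ $S$ pseudocompact, then Theorem~\ref{extTheorem} $\Rightarrow$ extension to $\beta S\times\beta X\to\beta X=\mu X$, then Theorem~\ref{contCriterion1} $\Rightarrow$ quasicontinuity, then Corollary~\ref{invCont}) is precisely what the authors use in the proof of Corollary~\ref{invCont} itself when they write ``$(S,Y)$ is a Grothendieck pair \ldots\ so Theorems~\ref{extTheorem} and~\ref{contCriterion1} show that $\pi'$ is quasicontinuous.'' Nothing is missing.
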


In conclusion we consider a simple example, demonstrating that Corollary \ref{invCont} cannot be strengthened to the joint continuity of the multiplication at every point of a compact semitopological semigroup.

\begin{example}
Let $X = \mathbb{R} \cup \lbrace \alpha \rbrace$  be a one-point compactification of $\mathbb{R}$, so the sets $\lbrace \alpha \rbrace \cup (\mathbb{R} \setminus [-x, x])$, where $x > 0$ form a local base for $\alpha$. Define multiplication on $X$ as follows:

\[
    xy =
\begin{cases}
    x + y,& \text{if } x, y \in \mathbb{R}\\
    \alpha,& \text{if } x = \alpha \text{ or } y = \alpha.
\end{cases}
\]

Note that $X$ is commutative. Let us show that $X$ is a semitopological semigroup. Let $s_x : X \to X$ be a map such that $s_x(y) = xy$. Note that $s_\alpha$ is constant, and, therefore, continuous. Moreover, the map $s_x$ is jointly continuous at $y \in \mathbb{R}$, since $\mathbb{R}$ is a topological subsemigroup of $X$ and open sets of $\mathbb{R}$ are also open in $X$. So it remains to check the continuity of $s_x$ at $\alpha$ for each $x \in \mathbb{R}$. Indeed, let $U = \mathbb{R} \setminus [-a, a]$ be an arbitrary basic open neighbourhood of $\alpha$. Consider $V = \lbrace \alpha \rbrace \cup (\mathbb{R} \setminus [-a - 2|x|, a + 2|x|]$. Then
\begin{center}
$X \setminus xV = X \setminus (\lbrace \alpha \rbrace \cup (x + \mathbb{R} \setminus [-a - 2|x|, a + 2|x|])) = \mathbb{R} \setminus ((x + \mathbb{R} \setminus [-a - 2|x|, a + 2|x|]))) = \mathbb{R} \setminus (\mathbb{R} \setminus (x + [-a - 2|x|, a + 2|x|])) = x + [-a - 2|x|, a + 2|x|] \supset [-a, a] = X \setminus U$,
\end{center}
so $xV \subset U$.

Note that 0 is an identity for $X$ and $\alpha$ is the only element of $X$ which is not a unit. We are going to show that the multiplication is discontinuous at $(\alpha, \alpha)$. Indeed, since any basic open neighbourhood $U$ of $\alpha$ has inverse elements, set $UU$ always contains a 0, but any basic neighbourhood $V$ of $\alpha$ doesn't, so $UU \not\subset V$, which means that the multiplication is discontinuous at $(\alpha, \alpha)$.
\end{example}

In \cite{Korovin}, A. Korovin presented a method for the construction of
pseudocompact semitopological groups that are neither topological groups nor
homeomorphic to any topological group. Moreover, pseudocompact Korovin's orbits are not homeomorphic to any Mal'tsev space \cite{RT}.

\begin{example}(Korovin's orbits)  {\it There is a pseudocompact semitopological group $G$ such that $G$ is not homeomorphic to any topological group.}
\end{example}

As the Korovin's example shows, the condition of pseudocompactness is not enough for the main theorem to be valid.

\section{Weak $q_D$-spaces}

Let us recall some definitions.

\begin{definition}\label{weakqdDef}
Let $X$ be a topological space, $x \in X$ and $A \subset X, A \neq \overline{A}$ and $\mathcal{N}(x)^{<\omega}$ is a set of all finite sequences of elements of $\mathcal{N}(x)$. The point $x$ is called a \textit{weak q-point} relative to $A$, if there is a function $\varphi: \mathcal{N}(x)^{<\omega} \rightarrow \mathcal{P}(A)$ such that:
\begin{enumerate}
    \item For each $\varphi(\langle U_1, \ldots, U_n \rangle)$ there is $T \subset X, |T| \leq \aleph_0$ such that \\$\varphi(\langle U_1, \ldots, U_n \rangle) \subset \overline{T}$;
    \item For each sequence $\lbrace U_i \rbrace_{i=1}^\infty$ of open neighbourhoods of $x$ the set $\overline{\bigcup\limits_{i=1}^\infty \varphi(\langle U_1, \ldots, U_i \rangle)} \cap \bigcap\limits_{i=1}^\infty U_i$ is not empty.
\end{enumerate}
\end{definition}

\begin{definition}
    A topological space $X$ is called a \textit{weak $q$-space}, if for each non-closed subset $A$ of $X$ there exists a weak $q$-point relative to $A$.
\end{definition}

Notions of a weak $q$-point and a weak $q$-space were introduced by M.O.~Asanov and N.V.~Velichko in ~\cite{AsanovVel}. In the same paper it was shown that the class of weak $q$-spaces includes such well-known classes as: $q$-spaces, $k$-spaces, spaces of countable tightness and locally separable spaces. In this section we introduce a somewhat more general notion of a \textit{weak $q_D$-space} and study some of its properties.

\begin{definition}
    Let $X$ be a topological space, $D \subset X, \overline{D} = X$. Space $X$ is called a \textit{weak $q_D$-space}, if each non-closed in $X$ subset of $D$ has a weak $q$-point.
\end{definition}

Obviously, weak $q$-spaces are also weak $q_D$-spaces. Also, introducing the notion of weak $q_D$-space is motivated by the fact that they include such a wide class as $q_D$-spaces.

\begin{definition}
    Let $X$ be a topological space. $X$ is called a \textit{$q_D$-space}, if there is a dense subset $D \subset X$ such that each point $x \in X$ has a countable family of its open neighbourhoods $\lbrace U_i \rbrace_{i=1}^\infty$ such that for each choice of $d_i \in U_i \cap D$ the set $\lbrace d_i \rbrace_{i=1}^\infty$ has a cluster point.
\end{definition}

The notion of a $q_D$-point was first introduced by P.S.~Kenderov, I.S.~Kortezov and W.B.~Moors in the article \cite{KKM}. Some generalisations of $q_D$-points, involving feeble compactness or pseudocompactness, have also been studied in \cite{CKM} and \cite{M}.

\begin{proposition}
   Any regular $q_D$-space is a weak $q_D$-space.
\end{proposition}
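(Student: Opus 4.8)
The plan is to show directly that in a regular $q_D$-space $X$, with $D$ the witnessing dense set, every non-closed subset $A\subseteq D$ admits a weak $q$-point in the sense of Definition \ref{weakqdDef}. Since $A$ is not closed, fix a point $x\in\overline A\setminus A$; I claim $x$ is a weak $q$-point relative to $A$. By the $q_D$-property, $x$ has a countable family $\{U_i\}_{i=1}^\infty$ of open neighbourhoods such that any transversal $\{d_i\}$ with $d_i\in U_i\cap D$ has a cluster point. Shrinking if necessary (using regularity) we may assume $\overline{U_{i+1}}\subseteq U_i$, so that $\bigcap_i U_i=\bigcap_i\overline{U_i}$ and every cluster point of such a transversal lies in $\bigcap_i U_i$; this last fact is where regularity does its work and is the technical heart of the argument.

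Next I define the function $\varphi:\mathcal N(x)^{<\omega}\to\mathcal P(A)$. Given a finite sequence $\langle V_1,\dots,V_n\rangle$ of neighbourhoods of $x$, set $W=V_1\cap\dots\cap V_n\cap U_n$; since $x\in\overline A$, this open set meets $A$, and I let $\varphi(\langle V_1,\dots,V_n\rangle)$ be a \emph{single point} of $W\cap A$ (or the empty set in the degenerate cases the definition allows). Condition (1) of Definition \ref{weakqdDef} is then immediate: each $\varphi$-value is a single point $t$, so take $T=\{t\}$, which has size $\le\aleph_0$ and satisfies $\varphi(\cdots)\subseteq\overline T$.

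For condition (2), take any sequence $\{V_i\}_{i=1}^\infty$ of open neighbourhoods of $x$ and put $d_i=\varphi(\langle V_1,\dots,V_i\rangle)$, so $d_i\in V_1\cap\dots\cap V_i\cap U_i\cap A\subseteq U_i\cap D$. By the choice of $\{U_i\}$ the set $\{d_i\}$ has a cluster point $p$; by the nested-neighbourhood reduction above, $p\in\bigcap_i U_i\subseteq\bigcap_i V_i$ (note $V_1\cap\dots\cap V_i$ being shrunk by $U_i$ is harmless since cofinally many $d_j$ lie in each fixed $V_i$). Also $p$ is a cluster point of a subset of $\bigcup_i\varphi(\langle V_1,\dots,V_i\rangle)$, hence $p\in\overline{\bigcup_i\varphi(\langle V_1,\dots,V_i\rangle)}$. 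Therefore $p$ belongs to $\overline{\bigcup_i\varphi(\langle V_1,\dots,V_i\rangle)}\cap\bigcap_i V_i$, which is consequently nonempty, establishing (2). I expect the only genuine subtlety to be the regularity-based passage from ``cluster point of a transversal'' to ``point of $\bigcap_i U_i$''; everything else is bookkeeping with the definitions.
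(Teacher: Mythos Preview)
Your argument has a genuine gap in the verification of condition~(2). You correctly arrange, using regularity, that the fixed $q_D$-neighbourhoods satisfy $\overline{U_{i+1}}\subseteq U_i$, and this does force any cluster point $p$ of $\{d_i\}$ to lie in $\bigcap_i U_i$. But the displayed inclusion $\bigcap_i U_i\subseteq\bigcap_i V_i$ is simply false for an arbitrary sequence $\{V_i\}$ of neighbourhoods of $x$, and your parenthetical does not repair it: from ``$d_j\in V_i$ for all $j\ge i$'' you only get $p\in\overline{V_i}$, not $p\in V_i$. Since Definition~\ref{weakqdDef} requires $p\in\bigcap_i V_i$ with the $V_i$ open, the argument does not close.

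The paper's proof addresses exactly this point by making the regularity step depend on the given sequence $\langle V_1,\dots,V_n\rangle$ rather than being fixed in advance. It recursively builds auxiliary open sets $\psi(\langle V_1,\dots,V_n\rangle)$ with $\overline{\psi(\langle V_1,\dots,V_n\rangle)}\subseteq \psi(\langle V_1,\dots,V_{n-1}\rangle)\cap U_n\cap V_n$, and then takes $\varphi(\langle V_1,\dots,V_n\rangle)$ to be a point of $\psi(\langle V_1,\dots,V_n\rangle)\cap A$. The key gain is that for $k>j$ one has $\overline{\psi(\langle V_1,\dots,V_k\rangle)}\subseteq\psi(\langle V_1,\dots,V_j\rangle)\subseteq V_j$, so the tail of $\{d_i\}$ is contained in a set whose \emph{closure} already sits inside each $V_j$; hence the cluster point lands in $\bigcap_j V_j$. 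Your approach can be salvaged by exactly this modification: instead of intersecting $V_1\cap\dots\cap V_n\cap U_n$, use regularity at stage $n$ to pass to an open set whose closure is contained in the previous stage and in $V_n$.
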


\begin{proof}
    Let $X$ be a regular $q_D$-space and $A \subset D$ not closed. Take an arbitrary point $x \in \overline{A} \setminus A$. Let us show that $x$ is a weak $q$-point relative to $A$. Since $X$ is a $q_D$-space, there is a countable family $\lbrace U_i \rbrace_{i=1}^\infty$ of open neighbourhoods of $x$ such that for each $d_i \in U_i \cap D$ the set $\lbrace d_i \rbrace_{i=1}^\infty$ has a cluster point. We define the function $\psi : \mathcal{N}(x)^{<\omega} \rightarrow \mathcal{N}(x)$ by recursion as follows:

    $\psi(\langle W_1 \rangle) = O$, where $O$ is an open neighbourhood of $x$, such that $\overline{O} \subset W \cap U_1$.

    $\psi(\langle W_1, \ldots, W_n \rangle) = O$, where $O$ is an open neighbourhood of $x$, such that

    $\overline{O} \subset \psi(\langle W_1, \ldots, W_{n-1} \rangle) \cap U_n \cap W_n$.

     For each $\langle W_1, \ldots, W_n \rangle \in \mathcal{N}(x)^{<\omega}$ fix a point $\varphi(\langle W_1, \ldots, W_n \rangle)$ in $\psi(\langle W_1, \ldots, W_n \rangle) \cap A$. Let us show that $\varphi$ satisfies conditions 1 and 2 from Definition \ref{weakqdDef}.

    Indeed, condition 1 from Definition \ref{weakqdDef} is obviously satisfied. Let us check the second condition. Let $\lbrace V_i \rbrace_{i=1}^\infty$ be arbitrary sequence of an open neighbourhoods of $x$. Since for each $i$ we have $\varphi(\langle V_1, \ldots, V_i \rangle) \in U_i \cap D$, the set $\lbrace \varphi(\langle V_1, \ldots, V_i \rangle), i \in \mathbb{N} \rbrace$ has a cluster point $d$. In particular, $d \in \overline{\bigcup\limits_{i=1}^\infty \varphi(\langle V_1, \ldots, V_i\rangle)}$. We are going to show that $d \in \bigcap\limits_{i=1}^\infty V_i$. Suppose the opposite and take $j \in \omega$, such that $d \not\in V_j$. Then $d \not\in \psi(\langle V_1, \ldots, V_j\rangle)$. But $\overline{\psi(\langle V_1, \ldots, V_k\rangle)} \subset \psi(\langle V_1, \ldots, V_j\rangle)$ for $k > j$. Take an open neighbourhood $W$ of $d$ such that $W \cap \psi(\langle V_1, \ldots, V_{j+1}\rangle) = \varnothing$ and $\varphi(\langle V_1, \ldots, V_l \rangle) \not\in W$ for $1 \leq l \leq j$. Then $W \cap \lbrace \varphi(\langle V_1, \ldots, V_i \rangle), i \in \mathbb{N} \rbrace = \varnothing$, so $d$ is not a cluster point for $\lbrace \varphi(\langle V_1, \ldots, V_i \rangle), i \in \mathbb{N} \rbrace$. The resulting contradiction completes the proof.
\end{proof}

Recall that a subset $A$ of $X$ is called {\it regular closed} set if it is equal to the closure of its interior.

\begin{proposition}\label{canonClosedqd}
    Let $X$ be a weak $q_D$-space and $A$ a regular closed subset of $X$. Then $A$ is a weak $q_D$-space.
\end{proposition}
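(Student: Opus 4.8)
The plan is to transfer the weak $q$-point structure from $X$ to $A$ by exploiting the fact that a regular closed set has nonempty interior everywhere along its "boundary-free" part, and more importantly, that any set which is non-closed in $A$ is also non-closed in $X$. So let $A = \overline{\mathrm{int}_X A}$ and put $D' = \mathrm{int}_X A$; this is a dense subset of $A$ (in the subspace topology) because $A$ is regular closed. I would take an arbitrary subset $B \subset D'$ that is not closed in $A$. Since $A$ is closed in $X$, a set is closed in $A$ iff it is closed in $X$; hence $B$ is not closed in $X$ either, and $B \subset D' \subset X$. Now I need $B$ to lie in some dense subset of $X$ to invoke the weak $q_D$-property of $X$ — but the definition of weak $q_D$-space only guarantees weak $q$-points for non-closed subsets of one fixed dense set $D$, so the first real task is to reconcile $D'$ with that fixed $D$.

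The cleanest route around this is to observe that the weak $q_D$-property as stated is really about \emph{all} non-closed subsets contained in $D$, and the key point (made implicitly in the preceding proposition's proof) is that if $X$ is a weak $q_D$-space witnessed by a dense $D$, then in fact every point of $\overline{B}\setminus B$ is a candidate to be a weak $q$-point once we know $B\subset D$. So I would instead argue: by density of $D$ in $X$ and the fact that $D' = \mathrm{int}_X A$ is open, the set $D\cap D'$ is dense in $A$; replace $D'$ by $D\cap D'$ and take $B\subset D\cap D'$ non-closed in $A$, hence non-closed in $X$, hence (being a non-closed subset of $D$) possessing a weak $q$-point $x\in\overline{B}\setminus B$ relative to $B$ in $X$, with witnessing function $\varphi:\mathcal{N}_X(x)^{<\omega}\to\mathcal{P}(B)$.

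It remains to check that $x\in A$ and that $\varphi$, suitably reinterpreted, witnesses that $x$ is a weak $q$-point relative to $B$ \emph{in the space $A$}. For the first, $x\in\overline{B}^X\subset\overline{A}^X = A$ since $A$ is closed. For the second, I use that neighborhoods of $x$ in $A$ are exactly traces $U\cap A$ of $X$-neighborhoods $U$ of $x$, and that for $U$ an $X$-neighborhood we may shrink to $U\cap\mathrm{int}_X A$, which is an $X$-neighborhood of $x$ (since $x\in A = \overline{\mathrm{int}_X A}$ forces... actually $x$ need not be interior, so here I must be careful). The honest fix: given a finite sequence $\langle U_1\cap A,\dots,U_n\cap A\rangle$ of $A$-neighborhoods of $x$, set $\tilde\varphi(\langle U_1\cap A,\dots\rangle) = \varphi(\langle U_1,\dots,U_n\rangle)$. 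Condition (1) carries over verbatim since the countable dense-in sets $T$ can be taken in $X$ and $\overline{T}^A = \overline{T}^X\cap A$ contains $\varphi(\dots)\subset B\subset A$. For condition (2), given a sequence $\{U_i\cap A\}$ of $A$-neighborhoods, lift to $X$-neighborhoods $U_i$; the set $\overline{\bigcup_i\varphi(\langle U_1,\dots,U_i\rangle)}^X\cap\bigcap_i U_i$ is nonempty in $X$, and since $\bigcup_i\varphi(\dots)\subset B\subset A$ and $A$ is closed, its $X$-closure lies in $A$, so this intersection equals $\overline{\bigcup_i\tilde\varphi(\langle U_1\cap A,\dots\rangle)}^A\cap\bigcap_i(U_i\cap A)$, which is therefore nonempty.

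The main obstacle I anticipate is precisely the bookkeeping in the last paragraph — matching the dense set $D$ furnished by the weak $q_D$-structure of $X$ against the natural dense set $\mathrm{int}_X A$ of $A$, and verifying that closures and neighborhood filters behave well under passing to the closed subspace $A$. None of these are deep, but they are exactly the kind of step where the definitions must be applied with care, particularly the observation that $x$ itself may fail to be an interior point of $A$ in $X$, so one cannot simply intersect everything with $\mathrm{int}_X A$ and must instead rely on $A$ being closed to keep the relevant closures inside $A$.
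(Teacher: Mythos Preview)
Your overall strategy matches the paper's: take $D\cap U$ (where $U=\operatorname{Int}_X A$) as the witnessing dense subset of $A$, lift $A$-neighbourhoods of the weak $q$-point $x$ back to $X$-neighbourhoods, and define $\tilde\varphi$ by applying the original $\varphi$ to the lifts. Your verification of condition~(2) is correct and coincides with the paper's argument (the key point being that $\bigcup_i\varphi(\cdots)\subset B\subset A$ and $A$ is closed, so the $X$-closure already lies in $A$).

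The gap is in your treatment of condition~(1). You assert that it ``carries over verbatim'' via the identity $\overline{T}^A=\overline{T}^X\cap A$. But the countable set $T$ handed to you by the weak $q$-point structure of $X$ is only a subset of $X$, not of $A$, so the expression $\overline{T}^A$ is not defined; and if you tacitly replace $T$ by $T\cap A$, the equality $\overline{T\cap A}^A=\overline{T}^X\cap A$ can fail outright (take $T$ accumulating to a point of $A$ while $T\cap A=\varnothing$). What is actually needed, and what the paper spends a paragraph on, is the following: since $\varphi(\langle\tilde U_1,\dots,\tilde U_n\rangle)\subset B\subset U$ with $U$ \emph{open} in $X$, every point $t$ of $\varphi(\cdots)$ has $V\cap U$ as an $X$-neighbourhood whenever $V$ is, and $V\cap U$ must meet $T$ because $t\in\overline{T}^X$. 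Hence $t\in\overline{T\cap U}^X$. Now $T\cap U$ is a countable subset of $A$, and since $A$ is closed its $X$-closure and $A$-closure agree; this is the countable set witnessing condition~(1) in $A$. So the step does not carry over verbatim---the openness of $U$ is used in an essential way to pull $T$ down into $A$.
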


\begin{proof}
    Let $A = \overline{U}$, where $U$ is open in $X$. Obviously, $D \cap U$ is dense in $A$. If $B \subset D \cap U$ is not closed in $A$, then it is not closed in $X$ and, therefore, has a weak $q$-point $x$. It is easy to see that $x \in \overline{U} = A$. We claim that $x$ is a weak $q$-point relative to $B$ in $A$. Let $\varphi$ be a function from the definition of a weak $q$-point for $X$. For each open neighbourhood $U$ of $x$ in $A$ fix an open set $\Tilde{U}$ in $X$ such that $\Tilde{U} \cap A = U$. Let's define a function $\Tilde{\varphi}$ as following: $\Tilde{\varphi}(\langle U_1, \ldots, U_n \rangle) = \varphi(\langle \Tilde{U_1}, \ldots, \Tilde{U_n} \rangle)$ and show that $\Tilde{\varphi}$ is the desired function.

    Let us check that condition 1 from Definition \ref{weakqdDef} is satisfied. Take an arbitrary finite sequence $U_1, \ldots, U_n$ of open neighbourhoods of $x$ in $A$ and a countable set $T \subset X$ such that $\Tilde{\varphi}(\langle U_1, \ldots, U_n \rangle) = \varphi(\langle \Tilde{U_1}, \ldots, \Tilde{U_n} \rangle) \subset \overline{T}$. Let us show that $\Tilde{\varphi}(\langle U_1, \ldots, U_n \rangle) \subset \overline{T \cap U}$. Suppose that there is $t \in \Tilde{\varphi}(\langle U_1, \ldots, U_n \rangle) \setminus \overline{T \cap U}$. Then there is an open neighbourhood $V$ of $t$ in $A$, such that $V \cap (T \cap U) = \varnothing$. Since $t \in U$, assume that $V$ is open in $X$ and $V \subset U$. But then $V \cap (T \cap U) = (V \cap U) \cap T = V \cap T = \varnothing$, which contradicts the fact that $t \in \overline{T}$, so $\Tilde{\varphi}(\langle U_1, \ldots, U_n \rangle) \subset \overline{T \cap U}$. It remains to note that $T \cap U$ is countable and the closure of $T \cap U$ in $X$ coincides with its closure in $A$, since $A$ is closed $X$. Thus, condition 1 of Definition \ref{weakqdDef} is satisfied.

    Now we show that condition 2 of Definition \ref{weakqdDef} is also satisfied for $\Tilde{\varphi}$. Take an arbitrary sequence $\lbrace U_i \rbrace_{i=1}^\infty$ of open neighbourhoods of $x$ in $A$. Then $\overline{\bigcup\limits_{i=1}^\infty \varphi(\langle \Tilde{U_1}, \ldots, \Tilde{U_i} \rangle)} \cap \bigcap\limits_{i=1}^\infty \Tilde{U_i} \neq \varnothing$. But $\overline{\bigcup\limits_{i=1}^\infty \varphi(\langle \Tilde{U_1}, \ldots, \Tilde{U_i} \rangle)} \subset A$, so
    \begin{center}
       $\overline{\bigcup\limits_{i=1}^\infty \varphi(\langle \Tilde{U_1}, \ldots, \Tilde{U_i} \rangle)} \cap \bigcap\limits_{i=1}^\infty \Tilde{U_i} = (\overline{\bigcup\limits_{i=1}^\infty \varphi(\langle \Tilde{U_1}, \ldots, \Tilde{U_i} \rangle)} \cap A) \cap \bigcap\limits_{i=1}^\infty \Tilde{U_i} = \overline{\bigcup\limits_{i=1}^\infty \varphi(\langle \Tilde{U_1}, \ldots, \Tilde{U_i} \rangle)} \cap (\bigcap\limits_{i=1}^\infty \Tilde{U_i} \cap A) = \overline{\bigcup\limits_{i=1}^\infty \varphi(\langle \Tilde{U_1}, \ldots, \Tilde{U_i} \rangle)} \cap \bigcap\limits_{i=1}^\infty (\Tilde{U_i} \cap A) = \overline{\bigcup\limits_{i=1}^\infty \Tilde{\varphi}(\langle U_1, \ldots, U_i \rangle)} \cap \bigcap\limits_{i=1}^\infty U_i \neq \varnothing$
    \end{center}
    That is, condition 2 of Definition \ref{weakqdDef} is also satisfied for $\Tilde{\varphi}$.
\end{proof}

Recall that a subset $A$ of a topological space $X$ is called {\it bounded} if every continuous function on $X$ is bounded on $A$.

\medskip

The following proposition was proved in \cite{AsanovVel}.

\begin{proposition} \label{predcompLemma}
If $F$ is bounded in $C_p(X)$, then for each separable $Y \subset X$ there is $F_1 \subset C_p(X)$ such that $F \subset F_1$ and $\pi_Y(F_1) = \overline{\pi_Y(F)}^{C_p(Y)}$ is compact and metrizable where $\pi_Y(f) = f\restriction_Y$.
\end{proposition}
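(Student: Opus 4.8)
The plan is to transport the question onto a countable dense subset of $Y$, where the ambient function space is metrizable. Fix a countable set $D=\{d_i:i\in\omega\}$ with $\overline{D}^{Y}=Y$ and consider the restriction maps $e_D:C_p(X)\to\mathbb R^{D}$, $e_D(f)=f\restriction_D$, and $\rho_D:C_p(Y)\to\mathbb R^{D}$, $\rho_D(g)=g\restriction_D$. Both are continuous, $\rho_D$ is injective (two continuous functions on $Y$ that agree on the dense set $D$ coincide), and $e_D=\rho_D\circ\pi_Y$. Since $F$ is bounded in $C_p(X)$ and $e_D$ is continuous, $e_D(F)$ is bounded in $\mathbb R^{D}\cong\mathbb R^{\omega}$, hence so is its closure $K:=\overline{e_D(F)}^{\,\mathbb R^{D}}$; a closed bounded subset of a metrizable space is pseudocompact (a continuous real-valued function on it extends over $\mathbb R^{D}$ by the Tietze theorem and is therefore bounded), and a pseudocompact metrizable space is compact. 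Hence $K$ is compact and metrizable — this is the one place where the metrizability in the conclusion is actually produced, and it is why $Y$ is assumed separable.

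I would then take $F_1:=\overline{F}^{\,C_p(X)}$; trivially $F\subseteq F_1\subseteq C_p(X)$, and continuity of $\pi_Y$ gives $\pi_Y(F_1)\subseteq\overline{\pi_Y(F)}^{\,C_p(Y)}$. Assume for a moment that $F_1$ is compact. Then $\pi_Y(F_1)$ is a compact, hence closed, subset of $C_p(Y)$ that contains $\pi_Y(F)$, so $\pi_Y(F_1)=\overline{\pi_Y(F)}^{\,C_p(Y)}$; and $\rho_D$ restricted to this compactum is a continuous injection into the compact metrizable space $K$, hence a homeomorphism onto a subspace of $K$, so $\overline{\pi_Y(F)}^{\,C_p(Y)}$ is compact and metrizable. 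Thus $F_1$ is the desired set and everything reduces to showing that $F$ is relatively compact in $C_p(X)$, equivalently that the pointwise closure $\overline{F}^{\,\mathbb R^{X}}$ — which is already compact, being a closed bounded subset of $\mathbb R^{X}$ — consists of continuous functions. (If one prefers to avoid any global hypothesis on $C_p(X)$, the same bookkeeping goes through with the weaker, purely local statement that $\pi_Y(F)$ is relatively compact in $C_p(Y)$ and that $\pi_Y$ maps $\overline{F}^{\,C_p(X)}$ onto $\overline{\pi_Y(F)}^{\,C_p(Y)}$.)

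This continuity-of-the-pointwise-closure assertion is the heart of the proof and the step I expect to be the main obstacle; it is a Grothendieck-type result, and it is here that ``bounded'' (as opposed to merely pointwise bounded) is indispensable. The natural route is a double-limit argument: were some $h\in\overline{F}^{\,\mathbb R^{X}}$ discontinuous at a point $x_0$, one would extract a sequence $(f_n)\subseteq F$ and a sequence of points $(x_k)$ clustering at $x_0$ for which the iterated limits $\lim_k\lim_n f_n(x_k)$ and $\lim_n\lim_k f_n(x_k)$ both exist and differ; this yields an $\ell^{1}$-type configuration inside $F$, out of which one builds a continuous real-valued function on $C_p(X)$ that is unbounded on $F$, contradicting the boundedness hypothesis. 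Making this extraction precise — and, in the local version, confining it to the separable set $Y$ — is the genuinely delicate point; everything else is a diagram chase through $e_D$, $\rho_D$ and $\pi_Y$.
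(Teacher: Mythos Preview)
The paper does not give its own proof of this proposition; it is quoted from \cite{AsanovVel} and used as an input to Theorem~\ref{weakqd} (via Corollary~\ref{restrLemma}). So there is no in-paper argument to compare against, but your attempt can still be assessed on its own terms.

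There is a real gap. Your chosen $F_1=\overline{F}^{\,C_p(X)}$ does what you want only if it is compact, and you explicitly reduce everything to that: ``showing that $F$ is relatively compact in $C_p(X)$''. But this is false for a general Tychonoff $X$---it is exactly the Grothendieck-type property that the paper spends Theorem~\ref{weakqd} establishing under extra hypotheses (pseudocompact, weak $q_D$), and the present proposition is invoked \emph{inside that proof}. So assuming relative compactness of $F$ in $C_p(X)$ here is circular, and the double-limit sketch at the end does not rescue it: without hypotheses on $X$, pointwise limits of bounded sets in $C_p(X)$ can perfectly well be discontinuous.

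Your parenthetical ``local'' alternative---that $\pi_Y(F)$ is relatively compact in $C_p(Y)$---is the correct target and is essentially what the proposition asserts, but you do not prove it; you only say ``the same bookkeeping goes through''. The first part of your argument (that $K=\overline{e_D(F)}^{\,\mathbb R^D}$ is compact metrizable) is correct and is indeed where the metrizability in the conclusion comes from. The substantive step, however, is showing that every $h\in\overline{\pi_Y(F)}^{\,\mathbb R^Y}$ is continuous on $Y$, i.e.\ lifting compactness from $\mathbb R^D$ back up to $C_p(Y)$ through the injection $\rho_D$. That requires an argument genuinely exploiting the separability of $Y$ (and the boundedness of $F$, not mere pointwise boundedness), and it is precisely the content of the Asanov--Velichko result being cited; your sketch does not supply it.
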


\begin{corollary}\label{restrLemma}
{\it If a set $F$ is bounded in $C_p(X)$ and $f \in \overline{F}^{\mathbb{R}^X}$, then for each separable $Y \subset X$ the restriction $f\restriction_Y$ is continuous.}
\end{corollary}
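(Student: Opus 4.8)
The plan is to deduce Corollary \ref{restrLemma} directly from Proposition \ref{predcompLemma} by chasing the set $F_1$ produced there. Fix a separable subspace $Y \subset X$ and suppose $F$ is bounded in $C_p(X)$ and $f \in \overline{F}^{\mathbb{R}^X}$. Apply Proposition \ref{predcompLemma} to obtain $F_1 \subset C_p(X)$ with $F \subset F_1$ such that $\pi_Y(F_1) = \overline{\pi_Y(F)}^{C_p(Y)}$ is compact (and metrizable). The key point is that $\pi_Y : \mathbb{R}^X \to \mathbb{R}^Y$ is continuous for the product topologies, so $\pi_Y\big(\overline{F}^{\mathbb{R}^X}\big) \subset \overline{\pi_Y(F)}^{\mathbb{R}^Y}$.

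Next I would identify this closure inside $\mathbb{R}^Y$ with the closure inside $C_p(Y)$. Since $\pi_Y(F_1) = \overline{\pi_Y(F)}^{C_p(Y)}$ is compact, it is closed in $\mathbb{R}^Y$; as it contains $\pi_Y(F)$, it contains $\overline{\pi_Y(F)}^{\mathbb{R}^Y}$. Hence $\pi_Y\big(\overline{F}^{\mathbb{R}^X}\big) \subset \overline{\pi_Y(F)}^{\mathbb{R}^Y} \subset \overline{\pi_Y(F)}^{C_p(Y)} \subset C_p(Y)$. In particular $\pi_Y(f) = f\restriction_Y \in C_p(Y)$, i.e. $f\restriction_Y$ is continuous, which is exactly the assertion.

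The only step requiring a word of care is the inclusion $\overline{\pi_Y(F)}^{\mathbb{R}^Y} \subset \overline{\pi_Y(F)}^{C_p(Y)}$: a priori the right-hand side is the closure taken in the subspace $C_p(Y)$, which carries the topology inherited from $\mathbb{R}^Y$, so one must check that this set is already closed in the ambient $\mathbb{R}^Y$. That is immediate from its compactness (a compact subset of a Hausdorff space is closed), which is precisely what Proposition \ref{predcompLemma} delivers. So there is essentially no obstacle here; the whole argument is a short diagram chase, and the substantive content is entirely contained in the cited proposition.
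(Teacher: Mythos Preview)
Your proof is correct and follows essentially the same route as the paper: continuity of $\pi_Y:\mathbb{R}^X\to\mathbb{R}^Y$ gives $f\restriction_Y\in\overline{\pi_Y(F)}^{\mathbb{R}^Y}$, and compactness of $\overline{\pi_Y(F)}^{C_p(Y)}$ (from Proposition~\ref{predcompLemma}) forces this closure to coincide with the closure in $\mathbb{R}^Y$, so $f\restriction_Y\in C_p(Y)$. The paper's argument is the same two-step chase, only stated more tersely.
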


\begin{proof}
    Since the map $\pi_Y: \mathbb{R}^X \rightarrow \mathbb{R}^Y$ is continuous, $f\restriction_Y \in \overline{\pi_Y(F)}^{\mathbb{R}^Y}$. On the other hand, since the set $\overline{\pi_Y(F)}^{C_p(Y)}$ is compact, it is closed in $\mathbb{R}^Y$ and since $\pi_Y(F) \subset \overline{\pi_Y(F)}^{C_p(Y)} \subset \overline{\pi_Y(F)}^{\mathbb{R}^Y}$ we have $\overline{\pi_Y(F)}^{C_p(Y)} = \overline{\pi_Y(F)}^{\mathbb{R}^Y}$, whence $f\restriction_Y \in C_p(Y)$.
\end{proof}

 In \cite{AsanovVel} it was proved that if $X$ is a pseudocompact weak $q$-space, then $(X, X)$ is a Grothendieck pair. We show that a similar theorem also holds for $q_D$-spaces, even though the idea is very much the same.

\begin{theorem}\label{weakqd}
Let $X$ be a pseudocompact space, $Y$ a weak $q_D$-space. Then $(X, Y)$ is a Grothendieck pair.
\end{theorem}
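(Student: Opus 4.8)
The plan is to argue by contradiction: suppose $(X,Y)$ is not a Grothendieck pair, so there is a continuous $f : X \to C_p(Y)$ with $\overline{f(X)}$ (closure taken in $C_p(Y)$, equivalently the pointwise-closure intersected with $C(Y)$) not compact. Since $X$ is pseudocompact, $f(X)$ is bounded in $C_p(Y)$, so its closure $F$ in $\mathbb{R}^Y$ is compact; failure of compactness of $\overline{f(X)}^{C_p(Y)}$ therefore means there is a point $g \in \overline{f(X)}^{\mathbb{R}^Y} \setminus C(Y)$, i.e.\ $g : Y \to \mathbb{R}$ is a pointwise limit of continuous functions that is itself discontinuous at some $y_0 \in Y$. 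Equivalently, associating to $f$ the separately continuous evaluation map $\Phi : X \times Y \to \mathbb{R}$, $\Phi(x,y) = f(x)(y)$, the failure of the Grothendieck property gives a discontinuity of the "limit in the first variable" at some point of $Y$; the task is to use the weak $q_D$-structure of $Y$ to contradict this.

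First I would locate the non-closed set in $D$ to which the weak $q$-point machinery applies. Since $g$ is discontinuous at $y_0$, there is $\varepsilon>0$ such that $y_0 \in \overline{A}$ where $A = \{ y \in Y : |g(y) - g(y_0)| \ge \varepsilon \}$, yet we may also arrange $y_0 \notin A$ (shrinking to a strict inequality), so $A$ is non-closed; passing to a dense $D$ witnessing that $Y$ is a weak $q_D$-space, replace $A$ by $A \cap D$ (still non-closed in $Y$, and still accumulating at $y_0$ because continuous functions separate points and $D$ is dense — here is where I must be slightly careful, possibly redefining $A$ as a suitable $D$-set from the start using that $g\restriction_D$ is a pointwise limit of restrictions of continuous functions). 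By hypothesis there is a weak $q$-point $x^* \in Y$ relative to this $D$-set $A$, with witnessing function $\varphi : \mathcal{N}(x^*)^{<\omega} \to \mathcal{P}(A)$ satisfying conditions (1) and (2) of Definition~\ref{weakqdDef}. Note $x^*$ lies in $\overline{A}$, hence $|g(x^*) - g(y_0)| \ge \varepsilon$ in the limiting sense; I will want to compare values of $g$ (and of the approximating continuous functions $f(x)$) at $x^*$ versus at points of the sets $\varphi(\langle U_1,\dots,U_n\rangle)$.

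The core of the argument is then a recursive construction, mimicking the proof that pseudocompact weak $q$-spaces are Grothendieck (from \cite{AsanovVel}): build inductively a sequence $\{U_i\}$ of neighbourhoods of $x^*$ and points $x_i \in X$ so that $f(x_i)$ nearly equals $g$ on the finitely many ``test points'' in $\bigcup_{k \le i}\varphi(\langle U_1,\dots,U_k\rangle)$ accumulated so far, using continuity of each $f(x_i)$ to choose the next $U_{i+1}$ small; condition (1) (that each $\varphi$-value sits in the closure of a countable set $T$) combined with Corollary~\ref{restrLemma} — $g\restriction_T$ is continuous since $F = \overline{f(X)}$ is bounded in $C_p(X)$ restricted appropriately... more precisely, $f(X)$ bounded in $C_p(Y)$ and $g \in \overline{f(X)}^{\mathbb{R}^Y}$ gives $g\restriction_T$ continuous for each separable $T \subset Y$ — lets me control $g$ on all of $\varphi(\langle U_1,\dots,U_k\rangle)$ and not merely on a dense subset of it. Finally, condition (2) supplies a point $d \in \overline{\bigcup_i \varphi(\langle U_1,\dots,U_i\rangle)} \cap \bigcap_i U_i$; on one hand $d \in \bigcap_i U_i$ together with the continuity of $g$ on the relevant separable set forces $g(d)$ to be close to $g(x^*)$, while on the other hand $d$ lies in the closure of the union of the $\varphi$-sets, all of whose points are in $A$ and hence have $g$-values at distance $\ge \varepsilon$ from $g(y_0)$, and the recursively arranged estimates force $g(d)$ close to $g(y_0)$ as well — a contradiction for $\varepsilon$ small relative to $|g(x^*)-g(y_0)|$, or more cleanly, a contradiction with $x^* \notin A$ being separated from $A$.

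The main obstacle I expect is the interplay between the set $A \subset D$ and genuine (non-$D$) accumulation: the weak $q$-point only controls closures of countable subsets of $A$ and intersections of neighbourhood sequences, so I must be sure that the discontinuity of $g$ at $y_0$ genuinely ``descends'' to a non-closed subset of $D$ on which $g$ is bounded away from its limiting value at $y_0$, and that Corollary~\ref{restrLemma} can be applied to the countable sets $T$ produced by condition~(1) to upgrade ``$g$ is big on a dense-in-$\varphi$-value subset'' to ``$g$ is big throughout the $\varphi$-value'' — matching the density requirement of condition~(1) with the separable-restriction continuity of Corollary~\ref{restrLemma} is the delicate bookkeeping step, and getting the quantifiers in the recursion right (choosing $U_{i+1}$ after seeing $x_i$, which is chosen after seeing all previous $\varphi$-values) is where an error would most easily creep in.
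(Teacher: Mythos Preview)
Your overall strategy matches the paper's (which in turn follows Asanov--Velichko): pass to a discontinuous $g\in\overline{f(X)}^{\mathbb{R}^Y}$, locate a non-closed subset of $D$ on which $g$ is ``bad'', invoke the weak $q$-point function $\varphi$, run an inductive construction of approximants $f(x_i)$ and shrinking neighbourhoods $U_i$, and finish via condition~(2) and Corollary~\ref{restrLemma}. Two points deserve correction.

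\textbf{The final contradiction is mis-stated.} You write that $d\in\bigcap_i U_i$ together with ``continuity of $g$ on the relevant separable set'' forces $g(d)$ close to $g(x^*)$. This does not follow: the $U_i$ are chosen using continuity of the \emph{approximants} $f(x_i)$, so membership $d\in\bigcap_i U_i$ controls the values $f(x_i)(d)$, not $g(d)$; and $g$, being discontinuous, need not respect those neighbourhoods at all. The paper does \emph{not} close the argument with $g$. Instead, after the recursion it restricts the sequence $\{f_i\}$ to a suitable countable set $Z$ containing $d$ (their $y$) and the accumulated test points, invokes Proposition~\ref{predcompLemma} to extract a cluster point $h\in C_p(Z)$ of $\{f_i\restriction_Z\}$, and then derives the contradiction for $h$: from $d\in\bigcap_i U_i$ one gets $|h(d)-f(x_0)|$ small, while from the recursively arranged estimates on the test set $T$ and the continuity of $h$ on $Z$ together with $d\in\overline{T}$ one gets $|h(d)-f(x_0)|$ large. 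In short, the object that must be continuous at the end is a cluster point of your constructed sequence, not the original limit $g$; Corollary~\ref{restrLemma} is used earlier, to pass from the $\varphi$-values (which are merely contained in closures of countable sets) to genuinely countable test sets on which $g$ is controlled.

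\textbf{Reducing to a non-closed subset of $D$.} You correctly flag this as delicate. The paper handles it by quoting a proposition from \cite{ArkhPon}: if $g$ is discontinuous on $Y$ then already $g\restriction_{D\cup\{x\}}$ is discontinuous for some $x$; after absorbing $x$ into $D$ one takes a closed $A\subset\mathbb{R}$ with $(g\restriction_D)^{-1}(A)$ non-closed in $Y$. This also explains why your separate points $y_0$ (discontinuity point) and $x^*$ (weak $q$-point) are not both needed: in the paper there is a single point $x_0$, chosen as the weak $q$-point relative to $B=(g\restriction_D)^{-1}(A)$, and one uses that $f(x_0)\notin A$ so that $\rho(f(x_0),A)=\varepsilon>0$; all subsequent estimates are distances from $f(x_0)$, and no second reference point enters.
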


\begin{proof}
Take an arbitrary continuous map $\Phi : X \rightarrow C_p(Y)$. Then $\Phi(X)$ is also pseudocompact and, therefore, bounded. We need to show that $F = \overline{\Phi(X)}$ is compact. Since $C_p(Y) \subset \mathbb{R}^Y$ and $\mathbb{R}^Y$ is Diedonne-complete, $\overline{\Phi(X)}^{\mathbb{R}^Y}$ is compact, so it is enough to show that $F = \overline{\Phi(X)}^{\mathbb{R}^Y}$.

Let $D \subset X$ be a dense set from the definition of a weak $q_D$-space. Take an arbitrary point $f \in \overline{\Phi(X)}^{\mathbb{R}^Y}$ suppose that $f \not\in C_p(Y)$, that is, $f$ is discontinuous. Due to a proposition proved in \cite{ArkhPon}, there is a point $x \in Y$ such that $f\restriction_{D \cup \lbrace x \rbrace}$ is discontinuous. Without a loss of generality we can assume that $x \in D$. Then there is a closed $A \subset \mathbb{R}$ such that $B = (f\restriction_D)^{-1}(A) = f^{-1}(A) \cap D$ is not closed in $D$. Then $B$ is not closed in $X$. Let $x_0 \in \overline{B} \setminus B$. Since $f(x_0) \not\in A$ and $A$ is closed, $\rho(f(x_0), A) = \varepsilon > 0$. Fix a function $\varphi$ from the definition of a weak $q$-point. Using the induction, we find a sequence $\lbrace f_i \rbrace_{i=1}^\infty \subset \Phi(X)$, sequences of sets $\lbrace C_i \rbrace_{i=1}^\infty \subset \mathcal{P}(A)$ and $\lbrace E_i \rbrace_{i=1}^\infty, E_i = \lbrace x_i^k \rbrace$ and a sequence of open sets $\lbrace W_i \rbrace_{i=1}^\infty \subset \mathcal{N}(x_0)$, satisfying the following conditions:

1) $|f(x_0) - f_i(x_0)| < \frac{\varepsilon}{4}$;

2) $|f(x_0) - f_i(x)| < \frac{\varepsilon}{4}$ for each $x \in W_i$;

3)  $|E_i| \leq \aleph_0$ and $\overline{E_i} \supset C_i$;

4) $|f_i(x_j^k) - f(x_0)| > \frac{3\varepsilon}{4}$ for each $j, k < i$ such that $|f(x_j^k) - f(x_0)| > \frac{3\varepsilon}{4}$.

\textit{The base of induction.} Since $f \in \overline{\Phi(X)}^{\mathbb{R}^X}$, there is a function $f_1 \in [f, x_0, \frac{\varepsilon}{4}] \cap \Phi(X)$. For $W_1$ we take an open set $f_1^{-1}(B(f(x_0), \frac{\varepsilon}{4})$, for $C_1$ a set $\varphi(\lbrace W_1 \rbrace)$ and for $E_1$ we take a countable subset of $X$, such that $C_1 \subset \overline{E_1}$. Condition 4) on the first step is satisfied automatically.

\textit{The step of induction.} Let functions $f_1, \ldots, f_k$ and sets $W_1, \ldots, W_k$, $C_1, \ldots, C_k$, $E_1, \ldots, E_k$, satisfying 1)-4) be constructed. Let $H_{k+1} = \lbrace x_m^n : m, n < k+1, |f(x_m^n) - f(x_0)| > \frac{3\varepsilon}{4}\rbrace$, $\varepsilon_1 = \min{\lbrace |f(x) - f(x_0)| - \frac{3\varepsilon}{4}, x \in H_{k+1} \rbrace}$ and $\varepsilon_2 = \min\lbrace \frac{\varepsilon_1}{2}, \frac{\varepsilon}{4} \rbrace$. Take $f_{k+1} \in [f, \lbrace x_0 \rbrace \cup H_k, \varepsilon_2] \cap \Phi(x)$. Condition 1) for $f_{k+1}$ is obviously satisfied. Let us check condition 2). Due to the choice of $\varepsilon_2$ we have
\begin{align*}
  |f(x) - f(x_0)| > \varepsilon_2 + \frac{3\varepsilon}{4}
\end{align*}
 for each $x \in H_{k+1}$. Then
 \begin{center}
     $\frac{3\varepsilon}{4} +\varepsilon_2 < |f(x) - f(x_0)| \leq |f(x) - f_{k+1}(x)| + |f_{k+1}(x) - f(x_0)| < \newline < \varepsilon_2 + |f_{k+1}(x) - f(x_0)|$,
 \end{center}
 whence it follows that
 \begin{align*}
     |f_{k+1}(x) - f(x_0)| > \frac{3\varepsilon}{4}.
 \end{align*}
 Let $W_{k+1} = f_{k+1}^{-1}(B(f(x_0), \frac{\varepsilon}{4})) \cap \bigcap_{j=1}^{k+1} W_j$, $C_{k+1} = \varphi(\lbrace W_1, \ldots, W_{k+1}\rbrace)$ and take $E_{k+1}$ such that $C_{k+1} \subset \overline{E_{k+1}}$.

 Take $y \in \overline{\bigcup\limits_{i=1}^\infty C_{i}} \cap \bigcap\limits_{i=1}^\infty W_i$ and $T = \lbrace x \in \bigcup\limits_{i=1}^\infty E_i : |f(x) - f(x_0)| > \frac{3\varepsilon}{4}\rbrace$. Let us show that $y \in \overline{T}$. Take $U \in \mathcal{N}(y)$. Since $y \in \overline{\bigcup\limits_{i=1}^\infty C_{i}}$, there is $j \in \omega$ such that $U \cap C_j \neq \varnothing$. Take $y' \in U \cap C_j$. Since the space $M = E_j \cup \lbrace y' \rbrace$ is separable, $f\restriction_M$ is continuous by corollary \ref{restrLemma}. Since $f(y') \in B$ and $f(x_0) \not\in B$, $|f(x_0) - f(y')| \geq \varepsilon$. Since $y' \in C_j \subset \overline{E_j}$ and the restriction $f\restriction_M$ is continuous, the set $(f\restriction_M)^{-1}(B(f(y'), \frac{\varepsilon}{5})) \cap U = f^{-1}(B(f(y'), \frac{\varepsilon}{4})) \cap M \cap U$ is an open neighbourhood of $y'$ in $M$ and $f^{-1}(B(f(y'), \frac{\varepsilon}{4})) \cap M \cap U \cap E_j \neq \varnothing$. Take $y'' \in f^{-1}(B(f(y'), \frac{\varepsilon}{5})) \cap M \cap U \cap E_j$. Then
 \begin{center}
     $\varepsilon \leq |f(x_0) - f(y')| \leq |f(x_0) - f(y'')| + |f(y'') - f(y')| < |f(x_0) - f(y'')| + \frac{\varepsilon}{4}$,
 \end{center}
 whence $|f(x_0) - f(y'')| > \frac{3\varepsilon}{4}$, i. e. $y'' \in U \cap T$.

 Let $Z = T \cup \lbrace y, x_0 \rbrace$. Since $Z$ is countable and $\lbrace f_i \rbrace_{i=1}^\infty$ is bounded as a subset of $F$, $\overline{\pi_Z(\lbrace f_i \rbrace_{i=1}^\infty)}^{C_p(Z)}$ is compact by lemma \ref{predcompLemma}. In particular, it means that $\pi_Z(\lbrace f_i \rbrace_{i=1}^\infty)$ has a cluster point $g \in C_p(Z)$.

 Let us show that $|g(y) - f(x_0)| \geq \frac{\varepsilon}{2}$. Indeed, if $t \in T$, then $|f_n(t) - f(x_0)| > \frac{3\varepsilon}{4}$ for $n$ large enough. Since $[g, t, \frac{\varepsilon}{4}] \cap \lbrace f_i \rbrace_{i=1}^\infty$ is infinite, there is $m$ such that inequalities
 \begin{gather*}
    |f_m(t) - f(x_0)| > \frac{3\varepsilon}{4},\\
    |f_m(t) - g(t)| < \frac{\varepsilon}{4}
 \end{gather*}
are satisfied simultaneously. Hence
\begin{center}
    $\frac{3\varepsilon}{4} < |f_m(t) - f(x_0)| \leq |f_m(t) - g(t)| + |g(t) - f(x_0)| < \frac{\varepsilon}{4} + |g(t) - f(x_0)|$
\end{center}
and $|g(t) - f(x_0)| > \frac{\varepsilon}{2}$.
Since $y \in \overline{T}$ and $g\restriction_Z$ is continuous, we also have $g(y) \in g(\overline{T}) \subset \overline{g(T)} \subset \overline{\mathbb{R} \setminus B[f(x_0), \frac{\varepsilon}{2}]} = \mathbb{R} \setminus B(f(x_0), \frac{\varepsilon}{2})$, that is, $|g(y) - f(x_0)| \geq \frac{\varepsilon}{2}$.

On the other hand, $y \in \bigcap_{i=1}^\infty W_i$, which means that $|f_k(y) - f(x_0)| < \frac{\varepsilon}{4}$ for each $k$. Take $f_k \in [g, y, \frac{\varepsilon}{12}] \cap \lbrace f_i \rbrace_{i=1}^\infty$. Then
\begin{center}
    $|g(y) - f(x_0)| \leq |g(y) - f_k(y)| + |f_k(y) - f(x_0)| < \frac{\varepsilon}{4} + \frac{\varepsilon}{12} = \frac{\varepsilon}{3}$
\end{center}
The obtained contradiction shows that $f \in C_p(Y)$ and $F$ is compact, as required.
\end{proof}

\begin{corollary}\label{monoidContCor}
{\it Let $S$ be a pseudocompact weak $q_D$-space and a semitopological monoid. Then its multiplication is continuous at each point of $\lbrace u \rbrace \times S$ where $u$ is a unit of $S$.}
\end{corollary}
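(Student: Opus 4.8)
The plan is to recognize that the multiplication $m\colon S\times S\to S$ is a separately continuous action of $S$ on itself and then to promote it to a \emph{quasicontinuous} action so that Corollary~\ref{invCont} applies verbatim. First I would record the purely algebraic/elementary observations: since $S$ is a semitopological monoid, $m$ is separately continuous, so $S$ is in particular a right topological monoid in the sense of Section~2 (for fixed $x$ the map $y\mapsto xy$ is continuous); by associativity $m(s_1s_2,x)=(s_1s_2)x=s_1(s_2x)=m(s_1,m(s_2,x))$, so $m$ is an action of $S$ on itself; and $m(1,x)=1\cdot x=x$ for the identity $1\in S$ (this last fact is convenient but not strictly needed, as Corollary~\ref{invCont} does not assume it).

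Next I would bring in the weak $q_D$-space hypothesis. Applying Theorem~\ref{weakqd} with $X=Y=S$ (here $S$ is pseudocompact and a weak $q_D$-space, hence in particular a weak $q_{S}$-space with dense set $D=S$), the pair $(S,S)$ is a Grothendieck pair. Consequently, by Theorem~\ref{extTheorem} the separately continuous map $m\colon S\times S\to S$ extends to a separately continuous map $\tilde m\colon\beta S\times\beta S\to\beta S$. Since $S$ is pseudocompact, $\mu S=\beta S$, so this says exactly that $m$ extends to a separately continuous map $\beta S\times\beta S\to\mu S$; by Theorem~\ref{contCriterion1} this is equivalent to $m$ being quasicontinuous.

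Having established that $m$ is a separately continuous, quasicontinuous action of the pseudocompact right topological monoid $S$ on the pseudocompact space $S$, I would invoke Corollary~\ref{invCont} directly: it gives that $m$ is jointly continuous at every point of $\lbrace u\rbrace\times S$ for each unit $u$ of $S$, which is the desired conclusion. (Equivalently, one could avoid citing quasicontinuity explicitly and instead feed the Grothendieck pair $(S,S)$ together with $\tilde m$ into the argument behind Corollary~\ref{invCont}/Corollary~\ref{1contTheoremExt}, where the passage from the identity $1$ to an arbitrary unit $u$ is handled by the translation trick $s\mapsto u^{-1}s$.)

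I do not expect a genuine obstacle here: the real content is already packaged in Theorem~\ref{weakqd}, and the present statement is an assembly of that theorem with Theorems~\ref{extTheorem} and~\ref{contCriterion1} and Corollary~\ref{invCont}. The only points requiring care are the bookkeeping that $m$ is indeed an action (associativity) with $m(1,\cdot)=\mathrm{id}_S$, the fact that every semitopological monoid is right topological in the paper's sense, the distinction between the identity $1$ and a general unit $u$ (absorbed into Corollary~\ref{invCont}), and checking that the cited results tolerate the coincidence $X=Y=Z=S$, which they do since $S$ is a Tychonoff pseudocompact space.
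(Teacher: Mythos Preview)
Your proposal is correct and follows essentially the same route as the paper: use Theorem~\ref{weakqd} to get that $(S,S)$ is a Grothendieck pair, then feed the action $m$ into the main results of Section~3. The paper's own proof is the one-line ``Immediately follows from Theorems~\ref{1contTheorem} and~\ref{weakqd} and the fact that the multiplication is an action of $S$ on itself''; you have simply unpacked the implicit passage through Theorems~\ref{extTheorem} and~\ref{contCriterion1} to obtain quasicontinuity, and you cite Corollary~\ref{invCont} (which already handles an arbitrary unit $u$) rather than Theorem~\ref{1contTheorem} (which covers only the identity) --- arguably the cleaner citation for the stated conclusion.
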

\begin{proof}
    Immediately follows from Theorems \ref{1contTheorem} and \ref{weakqd} and the fact that the multiplication is an action of $S$ on itself.
\end{proof}

\begin{corollary}\label{monoidCont}
  {\it  Let $S$ be a pseudocompact weak $q_D$-space and a semitopological semigroup, $S'$ a semi-open subsemigroup of $S$ with an identity $e$. Then the multiplication restricted to $S' \times S$ is continuous at each point of $In(S') \times S$ where $In(S')$ is a set of all units of $S'$.}
\end{corollary}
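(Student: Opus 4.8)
The plan is to reduce the statement to a direct application of Corollary~\ref{invCont}. The first move is to replace $S'$ by its closure $\overline{S'}$ in $S$. In a semitopological semigroup the closure of a subsemigroup is again a subsemigroup, and the continuous translations $y\mapsto ey$ and $y\mapsto ye$ fix $S'$ pointwise, hence fix $\overline{S'}$ pointwise, so $e$ remains an identity of $\overline{S'}$; moreover every unit of $S'$ is still a unit of $\overline{S'}$. Since $S'$ is semi-open, its closure equals the closure of its interior and is therefore regular closed, so $\overline{S'}$ is pseudocompact (regular closed subspaces of pseudocompact spaces are pseudocompact) and, by Proposition~\ref{canonClosedqd}, a weak $q_D$-space. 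Because restriction of a map to a subspace preserves continuity at each point of that subspace and $S'\times S\subset\overline{S'}\times S$, it suffices to prove the corollary with $\overline{S'}$ in place of $S'$. Thus from now on we may assume $S'$ is regular closed in $S$; in particular $S'$ is a pseudocompact weak $q_D$-space which, being a subsemigroup of the semitopological semigroup $S$ with identity $e$, is a right topological monoid.

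Next I would check that the restriction $\pi$ of the multiplication of $S$ to $S'\times S$ is a separately continuous action of $S'$ on $S$ (associativity in $S$ gives the action identity $\pi(ab,x)=\pi(a,\pi(b,x))$), and then argue that $\pi$ is quasicontinuous. By Theorem~\ref{weakqd}, $(S',S)$ is a Grothendieck pair, since $S'$ is pseudocompact and $S$ is a weak $q_D$-space. Theorem~\ref{extTheorem} then produces a separately continuous extension $\pi':\beta S'\times\beta S\to\beta S$ of $\pi$, and since $S$ is pseudocompact we have $\beta S=\mu S$, so Theorem~\ref{contCriterion1} yields that $\pi$ is quasicontinuous.

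Finally I would apply Corollary~\ref{invCont} to the pseudocompact right topological monoid $S'$, the pseudocompact space $S$, and the separately continuous quasicontinuous action $\pi$: it gives that $\pi$ is jointly continuous at each point of $\{u\}\times S$ for every unit $u$ of $S'$, i.e. at each point of $In(S')\times S$. Combined with the reduction above, this proves the statement.

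The main obstacle is the first paragraph. The hypothesis only gives that $S'$ is semi-open, so $S'$ itself need not be pseudocompact or a weak $q_D$-space, and the Grothendieck-pair machinery cannot be applied to it directly. The point is that passing to $\overline{S'}$ is harmless for the conclusion — the set of units does not shrink and pointwise continuity is inherited by subspaces — and that regular closedness is precisely the property which makes Proposition~\ref{canonClosedqd} and the inheritance of pseudocompactness available. After this reduction, the remainder is a routine chaining of Theorems~\ref{weakqd}, \ref{extTheorem}, \ref{contCriterion1} and Corollary~\ref{invCont}.
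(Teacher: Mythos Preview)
Your proposal is correct and follows essentially the same route as the paper: pass from $S'$ to its closure $\overline{S'}$ (a regular closed, hence pseudocompact weak $q_D$, subsemigroup with the same identity), observe that $(\overline{S'},S)$ is a Grothendieck pair by Theorem~\ref{weakqd}, and then invoke Corollary~\ref{invCont} for the restricted multiplication viewed as an action of $\overline{S'}$ on $S$. Your write-up is in fact more explicit than the paper's on two points the paper leaves implicit: that $In(S')\subset In(\overline{S'})$, and that the Grothendieck-pair hypothesis yields quasicontinuity via Theorems~\ref{extTheorem} and~\ref{contCriterion1}.
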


\begin{proof}
    Recall that the closure of a subsemigroup of a semitopological semigroup is again a subsemigroup. Since $e$ is the identity for $S'$ and $S'$ is dense in $\overline{S'}$, it is easy to see that $e$ is also the identity for $\overline{S'}$. Since $S'$ is semi-open, $\overline{S'}$ is regular closed. Then $\overline{S'}$ is a pseudocompact and by Propostition \ref{canonClosedqd} it is also a weak $q_D$-space. $(\overline{S'}, S)$ is a Grothendieck pair by Theorem \ref{weakqd}. It remains to note that the restriction of the multiplication $\varphi$ on $\overline{S'} \times S$ is a separately continuous action of $\overline{S'}$ on $S$. This corollary now follows from Corollary \ref{invCont}.
\end{proof}

\begin{corollary}\label{groupRestr}
  {\it  Let $S$ be a pseudocompact weak $q_D$-space and a semitopological semigroup and $G$ a semi-open subgroup of $S$. Then the restriction of the multiplication to $G \times S$ is continuous.}
\end{corollary}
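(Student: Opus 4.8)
The plan is to derive this statement directly from Corollary \ref{monoidCont}, the only new input being the trivial observation that in a group every element is a unit. First I would record that a subgroup $G$ of $S$ is, in particular, a subsemigroup of $S$ which carries its own identity element $e$ (the neutral element of the group $G$); this $e$ need not coincide with an identity of $S$, but that is immaterial, since Corollary \ref{monoidCont} only asks that $S'$ be a semi-open subsemigroup of $S$ possessing some identity. As $G$ is assumed semi-open, it therefore qualifies as such an $S'$, and all hypotheses of Corollary \ref{monoidCont} (with $S' := G$) are met: $S$ is a pseudocompact weak $q_D$-space and a semitopological semigroup.

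The key point is to identify $In(G)$, the set of units of $G$. Since $G$ is a group, every $g \in G$ has a two-sided inverse $g^{-1} \in G$ with $g g^{-1} = g^{-1} g = e$, so $g$ is a unit of $G$; hence $In(G) = G$. Plugging this into Corollary \ref{monoidCont}, the restriction of the multiplication to $G \times S$ is continuous at each point of $In(G) \times S = G \times S$, i.e.\ it is continuous on the whole of $G \times S$, which is exactly the assertion.

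I do not expect any real obstacle here: the substantive work has already been carried out in Theorems \ref{1contTheorem} and \ref{weakqd} and in Corollaries \ref{invCont} and \ref{monoidCont}, and the present statement is simply the special case in which the ``unit set'' fills out the entire domain because that domain is a group. The only thing to be careful about is not to assume that $e$ is an identity of $S$ or that $G$ is open; neither is needed, and neither is used.
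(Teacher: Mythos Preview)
Your proposal is correct and follows exactly the route the paper takes: the paper's own proof is the single line ``Follows from Corollary \ref{monoidCont},'' and you have simply spelled out why that corollary applies with $S'=G$ and why $In(G)=G$. Nothing is missing or different.
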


\begin{proof}
    Follows from Corollary \ref{monoidCont}.
\end{proof}

\begin{corollary}\label{paratopCor}
   {\it Let $S$ be a pseudocompact, weak $q_D$-space and a semitopological semigroup and $G$ a semi-open subgroup of $S$. Then $G$ is a paratopological group.}
\end{corollary}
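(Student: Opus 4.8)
The plan is to read off the joint continuity of the multiplication of $G$ directly from Corollary~\ref{groupRestr}. By that corollary the restriction $m\colon G\times S\to S$ of the multiplication of $S$ is jointly continuous. Since $G$ is a subgroup of $S$, we have $m(G\times G)\subseteq G$, so $m$ corestricts to a map $m\restriction_{G\times G}\colon G\times G\to G$, which is exactly the group operation of $G$.

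Next I would observe that, as $G$ carries the subspace topology inherited from $S$, the subspace topology that $G\times G$ inherits from $G\times S$ coincides with the product topology on $G\times G$ (the standard fact that forming finite products commutes with passing to subspaces). Consequently $m\restriction_{G\times G}\colon G\times G\to G$ is continuous when $G\times G$ carries the product topology, i.e. the multiplication of $G$ is jointly continuous. By definition this means that $G$ is a paratopological group.

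No genuine obstacle arises: the assertion is an immediate consequence of Corollary~\ref{groupRestr}, the only point worth making explicit being the elementary remark on subspace versus product topologies. In particular, nothing needs to be said about the inverse map, since a paratopological group is only required to have jointly continuous multiplication; the continuity of inversion (and hence the statement that $G$ is a topological group) would be a strictly stronger conclusion, which is not claimed here.
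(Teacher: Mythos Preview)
Your argument is correct and matches the paper's approach: the paper gives no proof at all for this corollary, treating it as immediate from Corollary~\ref{groupRestr}, which is exactly what you do. The extra care you take about subspace versus product topologies is a harmless elaboration of what the paper leaves implicit.
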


The proof of the following statement is only slightly different from the proof of Theorem 2.4.1 in \cite{Tgrs}.
\begin{theorem}\label{paratopTheorem}
    Let $G$ be a paratopological group and $G \subset X$ where $X$ is a regular pseudocompact space and $\overline{\operatorname{Int}{G}} = X$. Then $G$ is a topological group.
\end{theorem}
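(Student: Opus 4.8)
The plan is to imitate the proof of Theorem~2.4.1 in \cite{Tgrs} almost verbatim: that result is essentially the special case of the present statement in which $G$ itself is regular and pseudocompact, and the only thing that has to be replaced is the pseudocompactness of $G$, which is no longer assumed (note in particular that the extension-to-$\beta X$ results of the earlier sections are not directly available here, since $G$ itself need not be pseudocompact). First I would record the routine reductions. As $X$ is regular, so is the subspace $G$; and since $\operatorname{Int}{G}$ is dense in $X$ we have $\overline{G}^{X}=X$, so $G$ is a dense, semi-open subspace of the pseudocompact Tychonoff space $X$. Moreover, if $V$ is open in $G$, say $V=G\cap\widetilde V$ with $\widetilde V$ open in $X$, then $\widetilde V\cap\operatorname{Int}{G}$ is a nonempty open subset of $X$ lying inside $V$ and dense in $V$. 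Two facts will be used constantly: (a) every nonempty open subset of $G$ meets $\operatorname{Int}{G}$, so any auxiliary point that one wants to choose inside an open subset of $G$ may instead be chosen inside $\operatorname{Int}{G}$; and (b) $X$, being pseudocompact, is feebly compact, so it admits no infinite locally finite family of nonempty open sets. Finally, since the multiplication of the paratopological group $G$ is jointly continuous, all left and right translations of $G$ are self-homeomorphisms; hence it suffices to prove that inversion is continuous at the identity $e$, i.e. that $U^{-1}$ is a neighbourhood of $e$ for every open $U\ni e$ in $G$.

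Now I would carry out the argument of \cite{Tgrs}. Suppose, towards a contradiction, that $e$ does not belong to the interior (computed in $G$) of $U^{-1}$ for some open $U\ni e$, so that $G\setminus U^{-1}$ meets every neighbourhood of $e$. Following \cite{Tgrs}, one first deduces the ``saturation'' property that $W^{-1}$ has nonempty interior for every open $W\ni e$, and then obtains an outright contradiction; both steps have the same shape. Fixing, by joint continuity and regularity, an open $W=W_{0}\ni e$ with $\overline{W}\cdot\overline{W}\cdot\overline{W}$ contained in the pertinent neighbourhood of $e$, one constructs recursively points $g_{1},g_{2},\dots$ and a decreasing sequence $W_{0}\supseteq W_{1}\supseteq\cdots\ni e$ of open neighbourhoods of $e$ such that the open sets $g_{n}W_{n}$ are pairwise disjoint and the family $\{g_{n}W_{n}\}$ is locally finite in $X$; the combinatorial input, exactly as in \cite{Tgrs}, is that the ``bad'' points $g_{n}$ (those with $g_{n}^{-1}\notin U$, respectively $g_{n}^{-1}\notin W$) must stay suitably spread out. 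The only adjustment needed is to pick each $g_{n}$ inside the dense open set $\operatorname{Int}{G}$, which is legitimate by (a), and to carry out the separations in $X$ rather than in $G$; then each $g_{n}W_{n}\cap\operatorname{Int}{G}$ is a nonempty open subset of $X$, and since these sets are pairwise disjoint and form a locally finite family in $X$, (b) is contradicted. This contradiction shows that inversion is continuous at $e$, hence everywhere, so $G$ is a topological group.

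The step I expect to be the main obstacle is precisely the recursive construction of a family of open sets that is genuinely locally finite \emph{in $X$} (not merely pairwise disjoint, and not merely locally finite in $G$) --- a difficulty already present in the proof of Theorem~2.4.1 in \cite{Tgrs}. At each stage one must separate the newly chosen open set from the finitely many earlier members of the family, together with their finitely many possible cluster points in $X$, which is where the regularity of $X$ is used, while at the same time respecting the condition that pins down $g_{n}$ and keeping $g_{n}\in\operatorname{Int}{G}$ so that the group operations of $G$ remain available. The accompanying verifications --- that the interiors in $X$ of the translates $g_{n}W_{n}$ are nonempty, and that the closures one needs, taken in $X$, agree with the corresponding closures in $G$ up to a set disjoint from $\operatorname{Int}{G}$ --- are routine consequences of the density of $\operatorname{Int}{G}$ in $X$, and make up the ``only slightly different'' part of the proof.
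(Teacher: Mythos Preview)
Your proposal is correct and follows exactly the line the paper indicates: the paper gives no explicit proof but merely states that it is ``only slightly different from the proof of Theorem~2.4.1 in \cite{Tgrs}'', and your plan to rerun that argument while transferring the feeble-compactness contradiction from $G$ to the ambient space $X$ (using that $\operatorname{Int}G$ is dense in $X$ so that the translates $g_nW_n\cap\operatorname{Int}G$ are nonempty open in $X$) is precisely the intended modification. Your identification of the local-finiteness-in-$X$ step as the place where care is needed is accurate and matches the spirit of the paper's remark.
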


\begin{proposition}
    Let $S$ be a pseudocompact, weak $q_D$-space and a semitopological semigroup and $G$ a semi-open subgroup of $S$. Then $G$ is a topological group.
\end{proposition}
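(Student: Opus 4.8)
The plan is to chain two results already in hand: Corollary~\ref{paratopCor} to get that $G$ is a paratopological group, and Theorem~\ref{paratopTheorem} to upgrade this to a topological group. Corollary~\ref{paratopCor} applies verbatim, since its hypotheses (\,$S$ pseudocompact, a weak $q_D$-space, and a semitopological semigroup; $G$ a semi-open subgroup\,) are exactly the ones assumed here. So it remains only to exhibit an ambient space $X\supseteq G$ satisfying the hypotheses of Theorem~\ref{paratopTheorem}, and the natural choice is $X=\overline{G}$, the closure of $G$ in $S$.

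The next step is to check that $X=\overline{G}$ is a regular pseudocompact space. Regularity is automatic: $S$ is Tychonoff, hence regular, and regularity passes to subspaces. For pseudocompactness, observe that since $G$ is semi-open in $S$ we have $G\subseteq\overline{\operatorname{Int}_S G}\subseteq\overline{G}$, so in fact $\overline{G}=\overline{\operatorname{Int}_S G}$ is a regular closed subset of $S$; a regular closed subset of a pseudocompact (Tychonoff) space is pseudocompact, which is the same observation already used in the proof of Corollary~\ref{monoidCont}.

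Then I would verify the density condition $\overline{\operatorname{Int}_X G}=X$. The set $\operatorname{Int}_S G$ is open in $S$ and contained in $\overline{G}$, hence it is open in the subspace $X=\overline{G}$, so $\operatorname{Int}_S G\subseteq\operatorname{Int}_X G$. Since $\overline{G}$ is closed in $S$, the closure in $X$ of any subset of $X$ agrees with its closure in $S$, whence
\[
X=\overline{G}=\overline{\operatorname{Int}_S G}\subseteq\overline{\operatorname{Int}_X G}\subseteq X ,
\]
so $\overline{\operatorname{Int}_X G}=X$. Applying Theorem~\ref{paratopTheorem} to the paratopological group $G$ inside $X$ now gives that $G$ is a topological group.

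The only mildly delicate point I anticipate is the bookkeeping with interiors and closures when passing between $S$ and its subspace $\overline{G}$ — in particular making sure that ``semi-open in $S$'' really delivers the density hypothesis required by Theorem~\ref{paratopTheorem}; but this reduces to the elementary subspace observations above, and the rest is a direct invocation of the results already established.
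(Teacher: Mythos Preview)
Your proposal is correct and follows essentially the same route as the paper: invoke Corollary~\ref{paratopCor} to obtain that $G$ is paratopological, take $X=\overline{G}$, and apply Theorem~\ref{paratopTheorem} using that semi-openness gives $\overline{\operatorname{Int} G}=\overline{G}$. The paper's proof is simply a terser version of yours, omitting the verification that $\overline{G}$ is regular closed (hence pseudocompact) and the subspace bookkeeping for interiors, which you have spelled out correctly.
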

\begin{proof}
By Corollary \ref{paratopCor},  $G$ is a paratopological group. Since $G$ is semi-open in $S$, $\operatorname{Int}{G}$ is dense in $\overline{G}$. Then $G$ is a topological group by Theorem \ref{paratopTheorem}.
\end{proof}

\section{Subgroups of semitopological semigroups}

Recall that a topologized group $G$ is called \textit{paratopological}, if the multiplication $(x, y) \mapsto xy$ is continuous. If, moreover, taking the inverse $x \mapsto x^{-1}$ is also continuous, then $G$ is called a \textit{topological} group.

In this section we show that, under certain additional conditions, subgroups of pseudocompact semitopological semigroups are actually topological.

\begin{corollary}
   {\it Let $S$ be a semitopological monoid, $(S, S)$ a Grothendieck pair and $G$ a subgroup of $S$. Then $G$ is a topological group.}
\end{corollary}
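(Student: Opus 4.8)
The plan is to reduce to a subgroup of units of a pseudocompact semitopological monoid and then apply the joint-continuity results of Section~3. Let $e$ be the identity of $G$, so $e$ is an idempotent of $S$, and put $T=eSe$. The map $q: S\to S$, $q(x)=exe$, is continuous (it is the composite of the translations $x\mapsto ex$ and $x\mapsto xe$, which are continuous because $S$ is semitopological) and idempotent with image $T$, so $T$ is a retract of $S$; consequently $(T,T)$ is again a Grothendieck pair, this property being inherited by retracts. (Concretely, $q$ induces a closed embedding $q^{*}: C_p(T)\hookrightarrow C_p(S)$, and for continuous $f: T\to C_p(T)$ the set $\overline{f(T)}^{C_p(T)}$ is carried by $q^{*}$ onto $\overline{(q^{*}\circ f\circ q)(S)}^{C_p(S)}$, which is compact since $(S,S)$ is a Grothendieck pair.) In particular $T$ is pseudocompact; moreover $T$ is a semitopological, hence right topological, monoid with identity $e$, and every $g\in G$ is a unit of $T$, since $g=ege\in T$ and $g^{-1}\in G\subseteq T$ with $gg^{-1}=g^{-1}g=e$.

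First I would show that $G$ is a paratopological group. By Theorem~\ref{extTheorem} the multiplication $m: T\times T\to T$ extends to a separately continuous map $\widetilde{m}: \beta T\times\beta T\to\beta T$, and then Theorem~\ref{contCriterion1} (with $\mu T=\beta T$, as $T$ is pseudocompact) shows that $m$ is quasicontinuous. Viewing $m$ as a separately continuous quasicontinuous action of $T$ on itself, Corollary~\ref{invCont} gives that $m$ is jointly continuous at every point of $\{g\}\times T$ for each unit $g$ of $T$, in particular for each $g\in G$. Restricting to $G\times G$, and using that $G$ is a subgroup (so the product lands in $G$), the multiplication of $G$ is jointly continuous; thus $G$ is a paratopological group.

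It remains to prove that inversion on $G$ is continuous; since $G$ is a paratopological group it suffices to check this at $e$, and this is the crux. As $\widetilde{m}(e,\cdot)$ extends $\mathrm{id}_T$, we have $\widetilde{m}(e,y)=y$ for all $y\in\beta T$; hence $\pi: T\times\beta T\to\beta T$, $\pi(s,y)=\widetilde{m}(s,y)$, is a separately continuous action of $T$ on the pseudocompact space $\beta T$ with $\pi(e,y)=y$ (the identity $\pi(s_1 s_2,y)=\pi(s_1,\pi(s_2,y))$ holds for $y$ in the dense subset $T$ and both sides depend continuously on $y$), and it extends to the separately continuous map $\widetilde{m}: \beta T\times\beta(\beta T)\to\beta(\beta T)$ (recall $\beta(\beta T)=\beta T$). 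By Corollary~\ref{1contTheoremExt}, $\pi$ is jointly continuous at every point of $\{e\}\times\beta T$. Suppose now, for contradiction, that $u_{\alpha}\to e$ in $G$ but $u_{\alpha}^{-1}\not\to e$. Since $G\subseteq T\subseteq\beta T$ carry the subspace topologies and $\beta T$ is compact, some subnet $(u_{\gamma}^{-1})$ stays outside a fixed neighbourhood of $e$ and converges in $\beta T$ to a point $z\neq e$, while $u_{\gamma}\to e$. Computing inside $T$, $\pi(u_{\gamma},u_{\gamma}^{-1})=u_{\gamma}u_{\gamma}^{-1}=e$ for all $\gamma$; on the other hand, joint continuity of $\pi$ at $(e,z)$ forces $\pi(u_{\gamma},u_{\gamma}^{-1})\to\pi(e,z)=z$, so $z=e$, a contradiction. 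Hence inversion is continuous at $e$, therefore everywhere, and $G$ is a topological group.

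I expect the main obstacle to be the inversion step: since $T$ is only pseudocompact and not countably compact, the net of inverses need not cluster inside $T$, so one must pass to $\beta T$ and know that the extended multiplication is still jointly continuous along $\{e\}\times\beta T$---which is precisely what Corollary~\ref{1contTheoremExt} delivers once $T$ is recognised as acting separately continuously on $\beta T$. A secondary technical point is verifying that the Grothendieck pair property passes from $(S,S)$ to $(T,T)$ along the retraction $x\mapsto exe$.
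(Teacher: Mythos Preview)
Your proof is correct and follows essentially the same strategy as the paper: extend the multiplication separately continuously to the Stone--\v{C}ech compactification, recognise it as an action, and then exploit the joint-continuity results of Section~3 together with compactness of $\beta T$ to pin down cluster points of inverses. Two differences are worth noting. First, you pass to $T=eSe$ so that every element of $G$ is a \emph{unit of the acting monoid}; the paper works directly with $S$ and invokes Corollary~\ref{invCont} at elements $g\in G$, which tacitly requires the identity of $G$ to coincide with that of $S$---your reduction removes this ambiguity at the cost of verifying that $(T,T)$ inherits the Grothendieck-pair property along the retraction (your argument via the closed embedding $q^{*}$ is fine; alternatively one could cite Lemma~\ref{GrothPairLemma}). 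Second, for inversion the paper proves continuity directly at every $g\in G$ by showing that any cluster point $h\in\beta S$ of $g_{\alpha}^{-1}$ satisfies $\varphi'(g,h)=e$ and then computes $h=\varphi'(g^{-1},e)=g^{-1}$, whereas you only establish continuity of inversion at $e$ (via Corollary~\ref{1contTheoremExt} applied to the action on $\beta T$) and then transport it everywhere using the paratopological structure. Both variants work; yours is slightly more economical in that it only needs joint continuity along $\{e\}\times\beta T$ rather than along $\{g\}\times\beta T$ for all units $g$.
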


\begin{proof}
    By Corollary \ref{invCont}, $G$ is a paratopological group, so we only have to prove that taking the inverse is continuous. Consider a function $\varphi' = \Tilde{\varphi}\restriction_{S \times \beta S}$, where $\Tilde{\varphi} : \beta S \times \beta S \to \beta S$ is a separately continuous extension of the multiplication in $S$. We claim that $\varphi'$ is an action of $S$ on $\beta S$. Indeed, fix any $s, t \in S, x \in \beta S$ and let $x_\alpha$ be a net in $S$ converging to $x$. Then, by separate continuity, $\varphi'(st, x_\alpha)$ is converging to $\varphi'(st, x)$. But since $x_\alpha \in S$, we also have that $\varphi'(st, x_\alpha) = (st)x_\alpha = s(tx_\alpha) = \varphi'(s, \varphi'(t, x_\alpha))$. Applying separate continuity of $\varphi'$ twice, we see that $\varphi'(s, \varphi'(t, x_\alpha))$ is converging to $\varphi'(s, \varphi'(t, x))$, so $\varphi'(st, x)$ = $\varphi'(s, \varphi'(t, x))$ and $\varphi'$ is an action.

    Take any $g \in G$ and let $g_\alpha$ be any net in $G$ that converges to $g$. Then $g_\alpha^{-1}$ clusters to some $h \in \beta S$. By Theorem \ref{invCont}, $\varphi'$ is continuous at $(g, h)$, so $g_\alpha g^{-1}_\alpha$ clusters to $\varphi'(g, h) = e$. But $\varphi'(g^{-1}, \varphi'(g, h)) = \varphi'(g^{-1}g, h) = \varphi'(e, h) = h$ and $\varphi(g^{-1}, e) = g^{-1}$, so $h = g^{-1}$.
\end{proof}

\begin{corollary}
   {\it Let $(S, \bullet)$ be a pseudocompact semitopological monoid with quasicontinuous multiplication $\bullet$  and $G$ a subgroup of $S$. Then $G$ is a topological group.}
\end{corollary}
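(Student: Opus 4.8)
The plan is to transcribe the proof of the preceding corollary almost verbatim, substituting an appeal to Corollary~\ref{invCont} — available because $\bullet$ is quasicontinuous — for the use there of the Grothendieck-pair hypothesis $(S,S)$ to obtain joint continuity of the multiplication on $S\times S$. One preliminary reduction is needed: writing $e$ for the identity of $G$, I would first arrange that $e$ is the identity of $S$, so that every element of $G$ is a unit of $S$.

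For the reduction, replace $S$ by $M=eSe$. Then $M$ is a pseudocompact semitopological monoid with identity $e$ containing $G$ as a subgroup: $M$ is the continuous image of $S$ under $s\mapsto ese$, hence pseudocompact, and its multiplication — the restriction of $\bullet$ — is separately continuous. The point that needs an argument is that the multiplication of $M$ is again quasicontinuous; here I would use that $s\mapsto ese$ is a retraction of $S$ onto $M$, so $M$ is $C^\ast$-embedded in $S$ and hence $\overline{M}^{\beta S}=\beta M$. Restricting to $\beta M\times\beta M$ the separately continuous extension $\tilde\varphi:\beta S\times\beta S\to\beta S$ of $\bullet$ (which exists by Theorem~\ref{contCriterion1}, using $\mu S=\beta S$), and verifying it lands in $\beta M$, one obtains a separately continuous extension of the multiplication of $M$, whence by Theorem~\ref{contCriterion1} that multiplication is quasicontinuous. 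From now on assume $e=1$.

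Now $\bullet:S\times S\to S$ is a separately continuous quasicontinuous action of the pseudocompact right topological monoid $S$ on itself, so by Corollary~\ref{invCont} it is jointly continuous at every point of $\{u\}\times S$ with $u$ a unit of $S$; restricting to $G\times G$ shows the group multiplication is continuous, i.e.\ $G$ is a paratopological group. For continuity of inversion, put $\varphi'=\tilde\varphi\restriction_{S\times\beta S}$; using density of $S$ in $\beta S$ and two applications of separate continuity one checks, exactly as in the preceding corollary, that $\varphi'$ is a separately continuous action of $S$ on $\beta S$. Since $\beta S$ is compact and $S$ pseudocompact, $(S,\beta S)$ is a Grothendieck pair, so the joint-continuity corollary for Grothendieck pairs stated right after Corollary~\ref{invCont} applies to $\varphi'$ and yields joint continuity at every point of $\{u\}\times\beta S$ with $u$ a unit of $S$.

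Finally, fix $g\in G$ and a net $g_\alpha\to g$ in $G$. By compactness of $\beta S$ the net $\{g_\alpha^{-1}\}$ has a cluster point $h\in\beta S$; after passing to a subnet, $g_\alpha^{-1}\to h$. Joint continuity of $\varphi'$ at $(g,h)$ gives $1=g_\alpha g_\alpha^{-1}=\varphi'(g_\alpha,g_\alpha^{-1})\to\varphi'(g,h)$, so $\varphi'(g,h)=1$; then, since $\varphi'$ is an action, $g^{-1}=\varphi'(g^{-1},1)=\varphi'(g^{-1},\varphi'(g,h))=\varphi'(g^{-1}g,h)=\varphi'(1,h)=h$, where the last equality holds because $\tilde\varphi(1,\cdot)$ is continuous on $\beta S$ and fixes the dense set $S$ pointwise. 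Thus $g^{-1}$ is the only cluster point of $\{g_\alpha^{-1}\}$ in the compact space $\beta S$, so $g_\alpha^{-1}\to g^{-1}$, inversion is continuous, and $G$ is a topological group. I expect the reduction to $e=1$ to be the only genuinely non-formal step: quasicontinuity is not inherited by arbitrary subspaces, and it is the retract structure of $eSe$ that rescues it through Theorem~\ref{contCriterion1}; everything afterwards is a faithful copy of the proof of the preceding corollary, with Corollary~\ref{invCont} in the role of the Grothendieck-pair assumption.
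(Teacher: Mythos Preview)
Your proposal is correct and follows the paper's intended route: the corollary is stated there without proof, since the argument for the preceding corollary goes through verbatim once one notes that quasicontinuity of $\bullet$ yields the extension $\tilde\varphi:\beta S\times\beta S\to\beta S$ directly via Theorem~\ref{contCriterion1} (using $\mu S=\beta S$), in place of the Grothendieck-pair hypothesis and Theorem~\ref{extTheorem}. Your preliminary reduction to $e=1$ via the retract $M=eSe$ is a point of extra care that the paper leaves implicit in both corollaries.
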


\section{Continuity in semitopological semilattices}

Just as in the case of semigroups, we call a partially ordered set endowed with a topology a \textit{topologized poset}.

A subset $A$ of a poset $X$ is called \textit{convex}, if for each $a, b \in A$ and $a \leq c \leq b$ we have that $c \in A$. A topologized poset is called \textit{locally convex}, if it has a base of convex sets.

A subset $C$ of a poset $(X, \leq)$ is called a \textit{chain}, if $(C, \leq)$ is linearly ordered.

A topologized poset $X$ is called \textit{complete}, if for any chain $C \subset X$ we have $\inf{C} \in \overline{C}$ and $\sup{C} \in \overline{C}$.

We recall that a \textit{semilattice} is a commutative semigroup, each element of which is idempotent (i.e. $xx = x$). It is well known that a semilattice $X$ can be viewed as a poset, namely, each semilattice can be partially ordered as following: $x \leq y \leftrightarrow xy = x$ (this order is called a \textit{natural} order on a semilattice). For an element
$x$ of a semilattice $X$ by ${\downarrow}x=\{y\in X: y\leq x\}$ and ${\uparrow}x=\{y\in X: x\leq y\}$ we denote the {\it lower} and {\it upper cones} of $x$ in $X$ respectively. It is easy to see that $xy$ is the greatest lower bound of elements  $x$ and $y$ in terms of the natural order. Conversely, if in poset $(X, \leq)$ each two-element subset has the greatest lower bound, then $X$ with an operation taking their infimum is a semilattice.

We start with the following lemma.

\begin{lemma} \label{policemens}
Let $X$ be a topologized locally convex poset and let $\lbrace a_\alpha \rbrace_{\alpha \in A}$, $\lbrace b_\beta \rbrace_{\beta \in B}$ be nets, converging to some $x \in X$. If there is a net $\lbrace c_\gamma \rbrace_{\gamma \in \Gamma }$, satisfying the following condition:

$\forall \alpha_0 \in A, \beta_0 \in B$ $\exists \gamma_0 \in \Gamma$ $\forall \gamma \geq \gamma_0$ $\exists \alpha \geq \alpha_0, \beta \geq \beta_0 : a_\alpha \leq c_\gamma \leq b_\beta$.

Then $\lbrace c_\gamma \rbrace$ converges to $x$.
\end{lemma}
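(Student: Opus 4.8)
The plan is to use local convexity to convert a basic neighbourhood of $x$ into a convex one, and then to squeeze the eventual values of $c_\gamma$ between tails of $a_\alpha$ and $b_\beta$ that both already live in that convex neighbourhood. First I would fix an arbitrary open neighbourhood $V$ of $x$; by local convexity there is a convex open $W$ with $x \in W \subseteq V$, so it suffices to show $c_\gamma \in W$ eventually. Since $a_\alpha \to x$ and $b_\beta \to x$, choose $\alpha_0 \in A$ with $a_\alpha \in W$ for all $\alpha \geq \alpha_0$ and $\beta_0 \in B$ with $b_\beta \in W$ for all $\beta \geq \beta_0$.

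Next I would feed this pair $(\alpha_0,\beta_0)$ into the hypothesis on $\{c_\gamma\}$: it yields $\gamma_0 \in \Gamma$ such that for every $\gamma \geq \gamma_0$ there exist $\alpha \geq \alpha_0$ and $\beta \geq \beta_0$ with $a_\alpha \leq c_\gamma \leq b_\beta$. For such a $\gamma$, both $a_\alpha$ and $b_\beta$ lie in $W$ (by the choice of $\alpha_0,\beta_0$), and $a_\alpha \leq c_\gamma \leq b_\beta$, so convexity of $W$ forces $c_\gamma \in W \subseteq V$. Hence $c_\gamma \in V$ for all $\gamma \geq \gamma_0$, and since $V$ was an arbitrary neighbourhood of $x$ we conclude $c_\gamma \to x$.

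I do not expect a serious obstacle here; the only point that needs a moment's care is checking that the quantifier structure in the hypothesis matches exactly what is needed — namely that the witnesses $\alpha,\beta$ are allowed to depend on $\gamma$, which is fine because convexity is applied pointwise for each $\gamma$ and does not require a single uniform choice of $\alpha$ or $\beta$. Local convexity is what makes the argument work: without a convex neighbourhood one could not pass from "$a_\alpha$ and $b_\beta$ are in $W$ and $a_\alpha \le c_\gamma \le b_\beta$" to "$c_\gamma \in W$."
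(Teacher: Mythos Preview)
Your proof is correct and follows essentially the same approach as the paper's own argument: pick a convex open neighbourhood of $x$, use convergence of the two bounding nets to push their tails into it, invoke the hypothesis to get $\gamma_0$, and then use convexity to trap $c_\gamma$. The only cosmetic difference is that you explicitly start from an arbitrary neighbourhood $V$ and shrink to a convex $W\subseteq V$, whereas the paper simply begins with a convex neighbourhood (implicitly using that such sets form a base); the substance is identical.
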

\begin{proof}
Fix a convex open neighbourhood $U$ of $x$ and find indexes $\alpha_0 \in A, \beta_0 \in B$ such that $a_\alpha, b_\beta \in U$ for each $\alpha \geq \alpha_0, \beta \geq \beta_0$. Then there is an index $\gamma_0 \in \Gamma$ such that, for each $\gamma \geq \gamma_0$ there are $\alpha \geq \alpha_0, \beta \geq \beta_0$ such that $a_\alpha \leq c_\gamma \leq b_\beta$. Since $U$ is convex, these inequalities together with conditions $a_\alpha \in U, b_\beta \in U$ imply that $c_\gamma \in U$, i. e. $\lbrace c_\gamma \rbrace_{\gamma \in \Gamma}$ converges to $x$.
\end{proof}

If nets $\lbrace a_\alpha \rbrace_{\alpha \in A}$ and $\lbrace b_\beta \rbrace_{\beta \in B}$ have a common domain, the cumbersome condition from the previous lemma can be replaced by a much simpler one:
\begin{corollary}{\it
Let $X$ be a topologized locally convex poset, $\lbrace a_\alpha \rbrace_{\alpha \in \Lambda}$ and $\lbrace b_\alpha \rbrace_{\alpha \in \Lambda}$ are the nets converging to $x \in X$. If a net $\lbrace c_\alpha \rbrace_{\alpha \in \Lambda}$ is such that $a_\alpha \leq c_\alpha \leq b_\alpha$ for each $\alpha \in \Lambda$, then $\lbrace c_\alpha \rbrace_{\alpha \in \Lambda}$ also converges to $x$.}
\end{corollary}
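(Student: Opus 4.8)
The plan is to deduce this corollary as an immediate special case of Lemma~\ref{policemens}: we take all three index sets occurring there to be the single directed set $\Lambda$ (that is, $A=B=\Gamma=\Lambda$), and we verify that the pointwise squeeze $a_\alpha\le c_\alpha\le b_\alpha$ forces the interpolation condition required by the lemma. Since the hypotheses already give that $\{a_\alpha\}$ and $\{b_\alpha\}$ converge to $x$ and that $X$ is a topologized locally convex poset, nothing else is needed.

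Concretely, I would argue as follows. The domain $\Lambda$ of the nets is a directed set. Fix arbitrary $\alpha_0,\beta_0\in\Lambda$; by directedness choose $\gamma_0\in\Lambda$ with $\gamma_0\ge\alpha_0$ and $\gamma_0\ge\beta_0$. Then for every $\gamma\ge\gamma_0$ put $\alpha:=\gamma$ and $\beta:=\gamma$; these satisfy $\alpha\ge\alpha_0$ and $\beta\ge\beta_0$, and by hypothesis $a_\alpha=a_\gamma\le c_\gamma\le b_\gamma=b_\beta$. Hence the condition
\[
\forall\,\alpha_0,\beta_0\in\Lambda\ \ \exists\,\gamma_0\in\Lambda\ \ \forall\,\gamma\ge\gamma_0\ \ \exists\,\alpha\ge\alpha_0,\ \beta\ge\beta_0:\ a_\alpha\le c_\gamma\le b_\beta
\]
holds for the nets $\{a_\alpha\}_{\alpha\in\Lambda}$, $\{b_\beta\}_{\beta\in\Lambda}$, $\{c_\gamma\}_{\gamma\in\Lambda}$.

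Since $X$ is a topologized locally convex poset and both $\{a_\alpha\}_{\alpha\in\Lambda}$ and $\{b_\alpha\}_{\alpha\in\Lambda}$ converge to $x$, Lemma~\ref{policemens} applies directly and gives that $\{c_\alpha\}_{\alpha\in\Lambda}$ converges to $x$, as required. There is essentially no obstacle here; the only point worth stating carefully is the use of directedness of $\Lambda$ to obtain a single $\gamma_0$ above both $\alpha_0$ and $\beta_0$, after which choosing the witnesses $\alpha=\beta=\gamma$ turns the interpolation requirement into the given squeeze inequality verbatim.
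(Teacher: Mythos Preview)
Your argument is correct and matches the paper's intent exactly: the paper states this corollary without proof, introducing it with the remark that when the nets share a common domain ``the cumbersome condition from the previous lemma can be replaced by a much simpler one,'' so deducing it from Lemma~\ref{policemens} by setting $A=B=\Gamma=\Lambda$ and taking $\alpha=\beta=\gamma$ is precisely what is expected. Your careful use of directedness to pick $\gamma_0\ge\alpha_0,\beta_0$ is the only point needing mention, and you handle it correctly.
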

We continue the research \cite{KO} by proving the following theorem.
\begin{theorem}\label{locConvTheorem}
Let $X$ be a semitopological locally convex semilattice, ${\downarrow}x$ pseudocompact and the multiplication restricted to ${\downarrow}x$ is quasicontinuous for each $x \in X$. Then the multiplication of $X$ is continuous at each point of the diagonal $X \times X$.
\end{theorem}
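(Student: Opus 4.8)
The plan is to fix $x\in X$, reduce the continuity of the multiplication at $(x,x)$ to Theorem~\ref{1contTheorem} applied to the lower cone ${\downarrow}x$, and then transfer the conclusion from ${\downarrow}x$ back to $X$ by the order-squeeze of Lemma~\ref{policemens}.

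First I would check that ${\downarrow}x$, with the subspace topology and the restricted multiplication, is a pseudocompact right topological monoid with identity $x$ to which Theorem~\ref{1contTheorem} applies. Indeed: ${\downarrow}x$ is pseudocompact by hypothesis; the restricted multiplication maps ${\downarrow}x\times{\downarrow}x$ into ${\downarrow}x$, since $ab\le a\le x$; it is separately (hence right) topological, being a restriction of the separately continuous multiplication of $X$; it is associative, so it is an action of ${\downarrow}x$ on itself; and $x$ is a two-sided identity of ${\downarrow}x$, because $y\le x$ means $xy=yx=y$. Since the multiplication restricted to ${\downarrow}x$ is quasicontinuous by hypothesis, Theorem~\ref{1contTheorem} yields that the multiplication of ${\downarrow}x$ is continuous at each point of $\{x\}\times{\downarrow}x$, in particular at $(x,x)$.

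Now I would take arbitrary nets $a_\alpha\to x$ and $b_\alpha\to x$ in $X$ and show $a_\alpha b_\alpha\to x$. By separate continuity of the multiplication, $xa_\alpha\to xx=x$ and $xb_\alpha\to x$, and these nets lie in ${\downarrow}x$; hence, by the previous paragraph, $(xa_\alpha)(xb_\alpha)\to x$. Using commutativity and idempotency ($xx=x$) we have $(xa_\alpha)(xb_\alpha)=x\,a_\alpha b_\alpha$, so $x\,a_\alpha b_\alpha\to x$. Since $x\,a_\alpha b_\alpha\le a_\alpha b_\alpha\le a_\alpha$ for every $\alpha$, the net $\{a_\alpha b_\alpha\}$ is squeezed between the nets $\{x\,a_\alpha b_\alpha\}$ and $\{a_\alpha\}$, both converging to $x$ and indexed by the same directed set; the Corollary to Lemma~\ref{policemens} (which uses that $X$ is a topologized locally convex poset) gives $a_\alpha b_\alpha\to x$. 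As the nets were arbitrary, the multiplication of $X$ is continuous at $(x,x)$, and as $x$ was arbitrary, it is continuous at every point of the diagonal.

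The main obstacle is precisely this transfer step: Theorem~\ref{1contTheorem} only controls products inside ${\downarrow}x$, where both factors are $\le x$, whereas the factors $a_\alpha,b_\alpha$ of a general net need not be comparable with $x$. The device resolving it is to pre-multiply by $x$ (which keeps us inside ${\downarrow}x$ and, by separate continuity, preserves the limit $x$) and then to recover $a_\alpha b_\alpha$ from $x\,a_\alpha b_\alpha$ and $a_\alpha$ via the local convexity squeeze; the rest is routine verification that the hypotheses of Theorem~\ref{1contTheorem} and of the squeeze corollary are met.
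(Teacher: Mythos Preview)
Your proof is correct and follows essentially the same line as the paper's: push the nets into ${\downarrow}x$ by multiplying with $x$, use the joint continuity at the identity of the pseudocompact monoid ${\downarrow}x$, and then squeeze via local convexity. The only cosmetic differences are that the paper cites Corollary~\ref{invCont} rather than Theorem~\ref{1contTheorem} (either works, since $x$ is the identity and hence a unit of ${\downarrow}x$) and keeps two separate index sets, invoking Lemma~\ref{policemens} directly instead of its same-index corollary.
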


\begin{proof}
Let $\lbrace x_\alpha \rbrace_{\alpha \in A}$ and $\lbrace y_\beta \rbrace_{\beta \in B}$ be the nets, converging to $x$. Since the map $s_x : y \mapsto xy$ is continuous, nets $\lbrace x_\alpha x \rbrace_{\alpha \in A}$ and $\lbrace y_\beta x \rbrace_{\beta \in B}$ converge to $xx=x$ and their elements are subsets of ${\downarrow}x$. By Corollary \ref{invCont}, the multiplication, restricted on ${\downarrow}x$, is continuous at $x$, so the net $\lbrace x_\alpha x y_\beta x \rbrace_{(\alpha, \beta) \in A \times B}$ = $\lbrace x_\alpha  y_\beta x \rbrace_{(\alpha, \beta) \in A \times B}$ converges to $x$. Since for each $\alpha \in A, \beta \in B$ we have that $x_\alpha y_\beta x \leq x_\alpha y_\beta \leq x_\alpha$, the net $\lbrace x_\alpha y_\beta \rbrace_{(\alpha, \beta) \in A \times B}$ converges to $x$ by Lemma \ref{policemens}.
\end{proof}

\begin{lemma}\label{GrothPairLemma}
    Let $X$ be a space, $(X, X)$ a Grothendieck pair and $f : X \to Y$ a continuous surjective map. Then $(Y, Y)$ is a Grothendieck pair.
\end{lemma}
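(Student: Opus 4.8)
The plan is to fix an arbitrary continuous map $g\colon Y\to C_p(Y)$ and prove that $\overline{g(Y)}^{C_p(Y)}$ is compact, transferring information between $X$ and $Y$ by means of the composition operator $f^{*}\colon h\mapsto h\circ f$, which carries $\mathbb{R}^Y$ into $\mathbb{R}^X$ and $C_p(Y)$ into $C_p(X)$. First the reductions: since $(X,X)$ is a Grothendieck pair, $X$ is pseudocompact, hence so is $Y=f(X)$, hence $g(Y)$ is pseudocompact and so bounded in $C_p(Y)$; thus $L:=\overline{g(Y)}^{\mathbb{R}^Y}$ lies in a product of compact intervals and is compact, and it suffices to show $L\subseteq C_p(Y)$, for then $\overline{g(Y)}^{C_p(Y)}=L$ is compact. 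Next, $f^{*}\colon\mathbb{R}^Y\to\mathbb{R}^X$ is a closed topological embedding whose image is the closed set of functions that are constant on the fibres of $f$ (surjectivity of $f$ is used), and it restricts to an embedding $C_p(Y)\hookrightarrow C_p(X)$. Applying the Grothendieck-pair hypothesis to the continuous map $f^{*}\circ g\circ f\colon X\to C_p(X)$, whose image is $f^{*}(g(Y))$, I obtain that $K:=\overline{f^{*}(g(Y))}^{C_p(X)}$ is compact; being compact it is closed in $\mathbb{R}^X$, so $K=\overline{f^{*}(g(Y))}^{\mathbb{R}^X}$, which by the closed embedding $f^{*}$ equals $f^{*}(L)$; in particular $K\subseteq C_p(X)$, so $h\circ f$ is a continuous function on $X$ for every $h\in L$.

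Now I would pass to the compactifications $\beta X$ and $\beta Y$ to upgrade this to continuity of each $h\in L$ on $Y$. Consider $e\colon L\times X\to\mathbb{R}$, $e(h,x)=h(f(x))$: for fixed $x$ it is the evaluation $h\mapsto h(f(x))$, continuous on $L\subseteq\mathbb{R}^Y$, and for fixed $h$ it equals $h\circ f$, continuous by the previous paragraph; so $e$ is separately continuous. Since $L$ is compact, $(L,X)$ is a Grothendieck pair (the continuous image of $L$ in any $C_p(\cdot)$ is compact, hence closed), so Theorem~\ref{extTheorem} extends $e$ to a separately continuous $\hat e\colon L\times\beta X\to\beta\mathbb{R}$ (using $\beta L=L$). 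Let $\beta f\colon\beta X\to\beta Y$ be the continuous extension of $f$; it is a continuous surjection of compact Hausdorff spaces, hence a quotient map. Fix $h\in L$; then $\hat e(h,\cdot)$ is the unique continuous extension of $h\circ f$ to $\beta X$. I claim $\hat e(h,\cdot)$ is constant on the fibres of $\beta f$: given $p,p'\in\beta X$ with $\beta f(p)=\beta f(p')$, the map $\varphi\mapsto(\hat e(\varphi,p),\hat e(\varphi,p'))$ is continuous on $L$ by separate continuity of $\hat e$, and for $\varphi\in g(Y)\subseteq C_p(Y)$ it takes values on the diagonal, because $\varphi$ extends to $\bar\varphi\colon\beta Y\to\beta\mathbb{R}$ and uniqueness of extensions forces $\hat e(\varphi,\cdot)=\bar\varphi\circ\beta f$, whence $\hat e(\varphi,p)=\bar\varphi(\beta f(p))=\bar\varphi(\beta f(p'))=\hat e(\varphi,p')$; since $g(Y)$ is dense in $L$ and the diagonal of $\beta\mathbb{R}\times\beta\mathbb{R}$ is closed, the equality persists for all $\varphi\in L$, in particular for $h$. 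Because $\beta f$ is a quotient map, $\hat e(h,\cdot)$ factors as $k\circ\beta f$ for some continuous $k\colon\beta Y\to\beta\mathbb{R}$; restricting to $X$ and using surjectivity of $f$ gives $k\restriction_{Y}=h$, so $h$ is continuous on $Y$.

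Putting this together, $L\subseteq C_p(Y)$, hence $\overline{g(Y)}^{C_p(Y)}=L$ is compact; as $g$ was arbitrary, $(Y,Y)$ is a Grothendieck pair.

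I expect the main obstacle to be the descent carried out in the second paragraph: a continuous surjection need not be a quotient map, so the continuity of $h\circ f$ on $X$ does not by itself force the continuity of $h$ on $Y$, and one cannot lift convergent nets from $Y$ to $X$ in general. The device that resolves it is to lift the whole situation to $\beta X$ and $\beta Y$, where $\beta f$ automatically becomes a quotient map, and then to exploit that the separately continuous extension $\hat e$ already records the identifications made by $\beta f$ on the dense family $g(Y)$ of genuinely continuous functions, hence—by closedness of the diagonal—on all of $L$. (Note that the pointwise closure $L$, which need not be uniformly bounded, is why a naive argument through the restriction map $C_p(\beta X)\to C_p(X)$ fails and Theorem~\ref{extTheorem} is genuinely needed.)
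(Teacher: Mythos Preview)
Your proof is correct, but it takes a substantially different route from the paper's.

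The paper's argument is a two-line bootstrap through a symmetry result of Reznichenko. First, for any continuous $\Phi\colon Y\to C_p(X)$ one has $\Phi(Y)=\Phi(f(X))=(\Phi\circ f)(X)$, and $(X,X)$ Grothendieck makes $\overline{(\Phi\circ f)(X)}$ compact; hence $(Y,X)$ is a Grothendieck pair. Then the paper invokes Reznichenko's result that Grothendieck pairs are symmetric (under the ambient pseudocompactness hypotheses), so $(X,Y)$ is a Grothendieck pair. Repeating the same composition-with-$f$ trick once more gives $(Y,Y)$.

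Your argument avoids the black-box symmetry result and instead unpacks the mechanism directly: you first show (via the closed embedding $f^{*}$) that every $h$ in the $\mathbb{R}^{Y}$-closure $L$ satisfies $h\circ f\in C(X)$, and then you descend continuity from $X$ to $Y$ by lifting to $\beta X$ and $\beta Y$ via Theorem~\ref{extTheorem} and exploiting that $\beta f$ is a quotient map together with the density of $g(Y)$ in $L$. This is longer, but more self-contained: you only use the separately-continuous-extension theorem (Theorem~\ref{extTheorem}), not the additional symmetry statement. In effect, your second paragraph is a hands-on proof of the special case of the symmetry that is needed here. The paper's approach buys brevity; yours buys transparency about \emph{why} the continuity descends.
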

\begin{proof}
    Take any continuous map $\Phi : Y \to C_p(X)$. Then $\overline{\Phi(Y)} = \overline{\Phi(f(X))}$, which is compact since $(X, X)$ is a Grothendieck pair, so $(Y, X)$ is also a Grothendieck pair. Then, by \cite{Reznichenko}, $(X, Y)$ is a Grothendieck pair, so, applying the previous reasoning once again, we get that $(Y, Y)$ is a Grothendieck pair.
\end{proof}

For sets $X$  and $Y$ a map $\Phi : X \rightarrow \mathcal{P}(Y)$ is called a \textit{multimap} from $X$ to $Y$ and denoted as $\Phi : X \multimap Y$. An \textit{image} of a set $A \subset X$ by a multimap $\Phi$ is a set $\Phi(A) = \bigcup\limits_{x \in A} \Phi(x)$, and a \textit{preimage} of a set $B \subset Y$ is a set $\Phi^{-1}(B) = \lbrace x \in X : \Phi(x) \cap B \neq \varnothing \rbrace$. A multimap $\Phi : X \multimap Y$ between topological spaces $X$ and $Y$ is called a $T_2$-multimap, if $\Phi(x)$ is $\theta$-closed in $Y$ for each $x \in X$, that is, any point in $Y \setminus \Phi(x)$ has a closed neighbourhood that does not intersect $\Phi(x)$. If $\Phi^{-1}(F)$ is closed for each closed subset $F \subset Y$, then $\Phi$ is called \textit{lower-semicontinuous}. \textit{Multimorphism} of semigroups $X$ and $Y$ is a multimap $\Phi: X \multimap Y$, such that $\Phi(x)\Phi(y) \subset \Phi(xy)$ for each $x, y \in X$.

Note that in semitopological semilattice $X$ the sets ${\downarrow}x$ are continuous images of $X$. Combining Theorem \ref{locConvTheorem}, Lemma \ref{GrothPairLemma} and Theorem \ref{extTheorem} we get the following corollary.

\begin{corollary}
    {\it Let $X$ be a semitopological locally convex semilattice and $(X, X)$ be a Grothendieck pair. Then the multiplication on $X$ is continuous at each point of the diagonal $X \times X$.}
\end{corollary}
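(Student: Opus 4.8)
The plan is to reduce everything to Theorem \ref{locConvTheorem}, whose conclusion is precisely the assertion we want. Since $X$ is already assumed to be a semitopological locally convex semilattice, all that remains is to verify, for each $x \in X$, the two local hypotheses of that theorem: that ${\downarrow}x$ is pseudocompact and that the multiplication restricted to ${\downarrow}x \times {\downarrow}x$ is quasicontinuous.

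First I would pin down the structure of ${\downarrow}x$. The left translation $s_x : X \to X$, $s_x(y) = xy$, is continuous because the multiplication is separately continuous, and its image is exactly ${\downarrow}x$: on one hand $xy \le x$ always, since $(xy)x = x^2y = xy$; on the other hand any $z \le x$ satisfies $z = zx = xz = s_x(z)$ by commutativity. Thus ${\downarrow}x$ is a continuous image of $X$. Applying Lemma \ref{GrothPairLemma} to the continuous surjection $s_x : X \to {\downarrow}x$, and using that $(X, X)$ is a Grothendieck pair, I obtain that $({\downarrow}x, {\downarrow}x)$ is a Grothendieck pair; in particular ${\downarrow}x$ is pseudocompact. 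Moreover ${\downarrow}x$ is a subsemilattice of $X$ (the product of two elements below $x$ is again below $x$), so the restricted multiplication $m_x : {\downarrow}x \times {\downarrow}x \to {\downarrow}x$ is a well-defined separately continuous map.

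For quasicontinuity of $m_x$ I would invoke Theorem \ref{extTheorem}: since ${\downarrow}x$ is Tychonoff and pseudocompact and $({\downarrow}x, {\downarrow}x)$ is a Grothendieck pair, $m_x$ extends to a separately continuous map $\widetilde{m_x} : \beta({\downarrow}x) \times \beta({\downarrow}x) \to \beta({\downarrow}x)$. Because ${\downarrow}x$ is pseudocompact we have $\mu({\downarrow}x) = \beta({\downarrow}x)$, so $\widetilde{m_x}$ is exactly an extension of the type appearing in Theorem \ref{contCriterion1}; that criterion then gives that $m_x$ is quasicontinuous. With both hypotheses of Theorem \ref{locConvTheorem} now checked for every $x \in X$, that theorem delivers the continuity of the multiplication of $X$ at each point of the diagonal.

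I do not expect a genuine obstacle here: the argument is a bookkeeping composition of results already in hand. The only points requiring a little care are the identification of ${\downarrow}x$ with the continuous image $s_x(X)$ (so that Lemma \ref{GrothPairLemma} is applicable) and the observation $\mu({\downarrow}x) = \beta({\downarrow}x)$ (so that the extension produced by Theorem \ref{extTheorem} matches the form demanded by Theorem \ref{contCriterion1}).
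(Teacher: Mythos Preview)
Your proof is correct and follows essentially the same route as the paper: the paper remarks that ${\downarrow}x$ is a continuous image of $X$ and then states that the result follows by combining Theorem \ref{locConvTheorem}, Lemma \ref{GrothPairLemma} and Theorem \ref{extTheorem}, which is exactly your chain of reasoning (with Theorem \ref{contCriterion1} made explicit to pass from the extension back to quasicontinuity).
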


Note that although the formulation of Theorem 2.1 in \cite{BaBaChainFinite} imposes the condition of continuity of the multiplication, it is clear from the proof that its continuity only at the points of the diagonal of $X \times X$ is sufficient.

\begin{corollary}{\it
Let $X$ be a complete topologized semilattice, $Y$ be a locally convex semitopological semilattice, ${\downarrow}x$ pseudocompact and the multiplication restricted to ${\downarrow}x$ is quasicontinuous for each $x \in X$, $\Phi : X \multimap Y$ an upper-semicontinuous $T_2$-multimorphism. Then $\Phi(X)$ is a $\theta$-closed subset of $Y$.}
\end{corollary}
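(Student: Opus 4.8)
The plan is to obtain the statement as a direct application of Theorem~2.1 of \cite{BaBaChainFinite}, invoked in the weakened form recorded in the remark immediately preceding this corollary: for that theorem to deliver the $\theta$-closedness of the image of an upper-semicontinuous $T_2$-multimorphism, one does not need the multiplication of the complete semilattice to be jointly continuous everywhere, but only at the points of its diagonal. Granting this reduction, all the hypotheses of Theorem~2.1 are already at hand --- $X$ is a complete topologized semilattice, $Y$ is a (Hausdorff) semitopological semilattice, and $\Phi : X \multimap Y$ is an upper-semicontinuous $T_2$-multimorphism --- save for the continuity of the multiplication of $X$ on the diagonal of $X \times X$. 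Thus the whole proof reduces to verifying this single property of the \emph{domain} $X$.

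To produce it, I would apply Theorem~\ref{locConvTheorem} to $X$ itself. The data given about $X$ match the hypotheses of that theorem word for word: for every $x \in X$ the lower cone ${\downarrow}x$ is pseudocompact and the multiplication of $X$ restricted to ${\downarrow}x$ is quasicontinuous. Provided $X$ carries the standing structure of Theorem~\ref{locConvTheorem} --- that is, $X$ is a locally convex semitopological semilattice --- the theorem immediately yields that the multiplication $X \times X \to X$ is continuous at each point of the diagonal. This is precisely the weakened continuity assumption that Theorem~2.1 requires, so feeding it back into that theorem gives that $\Phi(X)$ is $\theta$-closed in $Y$, as claimed.

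The step demanding the most care is the bookkeeping of which semilattice carries which hypotheses. The pseudocompactness and quasicontinuity conditions are imposed on the principal ideals ${\downarrow}x$ of the domain $X$, so Theorem~\ref{locConvTheorem} must be applied to $X$ and produces diagonal continuity of $X$; it would be a mistake to transport these conditions to the target $Y$. Correspondingly, it is exactly this diagonal continuity of $X$ --- matching the ``$X \times X$'' appearing in the cited remark --- that supplies the one missing input to Theorem~2.1, while the role of $Y$ is only that of a locally convex Hausdorff semitopological semilattice into which $\Phi$ maps. The remaining points, which I would treat as routine, are to confirm that $X$ indeed meets the standing assumptions of Theorem~\ref{locConvTheorem} (local convexity and separate continuity of its multiplication) and that the proof of Theorem~2.1 in \cite{BaBaChainFinite} genuinely invokes the continuity of the multiplication only at diagonal points.
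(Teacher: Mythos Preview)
Your approach is exactly the paper's intended one: the corollary is stated without proof immediately after the remark weakening Theorem~2.1 of \cite{BaBaChainFinite} to diagonal continuity, so the evident derivation is Theorem~\ref{locConvTheorem} (to obtain continuity of the multiplication on the diagonal) followed by that weakened Theorem~2.1. Your caution about hypothesis bookkeeping is well placed --- as written, the corollary attaches local convexity to $Y$ while Theorem~\ref{locConvTheorem} needs it on the semilattice carrying the ${\downarrow}x$ conditions (here $X$); this looks like a slip in the paper's formulation rather than a defect in your argument.
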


\bibliographystyle{model1a-num-names}
\bibliography{<your-bib-database>}

\end{document}